\documentclass[12pt,twoside]{amsart}
\usepackage{amssymb}
\usepackage{amsmath}
\usepackage{xypic}

\dedicatory{Dedicated to Professor Shigefumi Mori}

\title{Vanishing theorems}
\author{Osamu Fujino} 
\date{2013/1/24, version 1.32}
\subjclass[2010]{Primary 14F17; Secondary 14E30}
\keywords{vanishing theorems, torsion-free theorem, 
semi divisorial log terminal, Hodge theory, semi-positivity}
\address{Department of Mathematics, Faculty of Science, 
Kyoto University, Kyoto 606-8502, Japan}
\email{fujino@math.kyoto-u.ac.jp}

\newcommand{\Spec}[0]{{\operatorname{Spec}}}

\newcommand{\Exc}[0]{{\operatorname{Exc}}}
\newcommand{\Supp}[0]{{\operatorname{Supp}}}
\newcommand{\Div}[0]{{\operatorname{Div}}}
\newcommand{\PerDiv}[0]{{\operatorname{PerDiv}}}
\newcommand{\Weil}[0]{{\operatorname{Weil}}}
\newcommand{\Gr}[0]{{\operatorname{Gr}}}
\newtheorem{thm}{Theorem}[section]
\newtheorem{lem}[thm]{Lemma}
\newtheorem{cor}[thm]{Corollary}
\newtheorem{prop}[thm]{Proposition}

\theoremstyle{definition}
\newtheorem{ex}[thm]{Example}
\newtheorem{defn}[thm]{Definition}
\newtheorem{rem}[thm]{Remark}
\newtheorem*{ack}{Acknowledgments} 

\newtheorem{say}[thm]{}
\newtheorem{step}{Step}

\begin{document}

\maketitle 

\begin{abstract}
We prove some injectivity, torsion-free, 
and vanishing theorems for 
simple normal crossing pairs. 
Our results heavily depend on the 
theory of mixed Hodge structures on cohomology groups with compact support. 
We also treat several 
basic properties of semi divisorial 
log terminal pairs. 
\end{abstract}

\tableofcontents
\section{Introduction}
In this paper, we prove some 
vanishing theorems for simple normal crossing pairs, which will play 
important roles in the study of higher dimensional algebraic varieties. 
We note that the notion of {\em{simple normal crossing pairs}} includes 
here the case when the ambient variety itself has several irreducible components with simple 
normal crossings. 
Theorem \ref{main} is a generalization of the works of several authors:~Kawamata, 
Viehweg, Koll\'ar, Esnault--Viehweg, Ambro, Fujino, and others (cf.~\cite{kollar1}, 
\cite{kmm}, \cite{ev}, \cite{kollar-sha}, 
\cite{ambro}, \cite{high}, \cite{fujino-mi}, 
\cite{fujino-on}, \cite{book}, 
\cite{fujino-fund}, and so on). 

\begin{thm}[{see Theorem \ref{8}}]\label{main}
Let $(Y, \Delta)$ be a simple normal crossing 
pair such that $\Delta$ is a boundary $\mathbb R$-divisor on $Y$. 
Let $f:Y\to X$ be a proper morphism to an algebraic variety $X$ and let $L$ be a Cartier 
divisor on $Y$ such that 
$L-(K_Y+\Delta)$ is $f$-semi-ample. 
\begin{itemize}
\item[(i)] every associated prime of $R^qf_*\mathcal O_Y(L)$ is the 
generic point of the $f$-image of some stratum of $(Y, \Delta)$. 
\item[(ii)] 
let $\pi:X\to V$ be a projective morphism to an algebraic variety $V$ such 
that $$L-(K_Y+\Delta)\sim _{\mathbb R}f^*H$$ for 
some $\pi$-ample $\mathbb R$-Cartier 
$\mathbb R$-divisor $H$ on $X$. 
Then $R^qf_*\mathcal O_Y(L)$ is $\pi_*$-acyclic, that is, 
$$R^p\pi_*R^qf_*\mathcal O_Y(L)=0$$ for every $p>0$ and $q\geq 0$. 
\end{itemize}
\end{thm}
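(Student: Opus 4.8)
The plan is to reduce both parts to one Hodge-theoretic injectivity theorem and then argue purely homologically. The injectivity statement I set up first is: if $(Y,\Delta)$ is a simple normal crossing pair with $\Delta$ a boundary $\mathbb R$-divisor, and $L$ is a Cartier divisor with $L-(K_Y+\Delta)\sim_{\mathbb R}S+D$, where $S$ is a semi-ample $\mathbb R$-divisor and $D$ is an effective Cartier divisor whose support contains no stratum of $(Y,\Delta)$, then multiplication by the canonical section of $\mathcal O_Y(D)$ gives an injection
\[
H^q(Y,\mathcal O_Y(L))\hookrightarrow H^q(Y,\mathcal O_Y(L+D))\qquad\text{for all }q.
\]
This is the one genuinely deep input: its proof runs through the mixed Hodge structures on the cohomology with compact support of the open strata of $(Y,\Delta)$ and the $E_1$-degeneration of the associated Hodge-to-de Rham spectral sequence. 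Granting it, everything else is formal; I also use the Koll\'ar-type direct-image decomposition $Rf_*\mathcal O_Y(L)\cong\bigoplus_qR^qf_*\mathcal O_Y(L)[-q]$, valid whenever $L-(K_Y+\Delta)$ is semi-ample, which is itself deduced from (i) by a standard argument.

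For (i) I work locally on $X$. If some $x\in X$ were an associated prime of $\mathcal F:=R^qf_*\mathcal O_Y(L)$ whose closure is not the $f$-image of a stratum — the other case, an embedded point lying over such an image, being identical — I pick a very ample Cartier divisor $A$ on $X$ and a general member $B$ of $|A|$ through $x$, arranged so that $\operatorname{Supp}(f^*B)$ contains no stratum of $(Y,\Delta)$. For $m\gg0$, Serre vanishing degenerates the Leray spectral sequences, identifying $H^q(Y,\mathcal O_Y(L+mf^*A))=H^0(X,\mathcal F\otimes\mathcal O_X(mA))$ and likewise with $B$ added, the map between them being multiplication by $s_B$. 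Since $L+mf^*A-(K_Y+\Delta)$ is semi-ample and $\sim_{\mathbb R}(\text{semi-ample})+f^*B$, the injectivity theorem makes this map injective, so the kernel of $\mathcal F\otimes\mathcal O_X(mA)\xrightarrow{\,s_B\,}\mathcal F\otimes\mathcal O_X(mA+B)$ has no global sections; being, up to the ample twist $\mathcal O_X(mA)$, a fixed coherent sheaf, Serre vanishing then forces it to vanish. Hence $s_B\in\mathfrak m_x$ is a nonzerodivisor on $\mathcal F$, so $x\notin\operatorname{Ass}\mathcal F$, a contradiction.

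For (ii) the assertion is local on $V$, so I may take $V$ affine and then, after the routine compactification (and equivariant resolution to restore the simple normal crossing condition), $V$ projective, and induct on $\dim V$. If $\dim V=0$, then $X$ is projective and $H$ ample; choosing an effective $\mathbb R$-divisor $D\sim_{\mathbb R}H$ with $\operatorname{Supp}(f^*D)$ containing no stratum, the injectivity theorem yields injections $H^{p+q}(Y,\mathcal O_Y(L))\hookrightarrow H^{p+q}(Y,\mathcal O_Y(L+Nf^*D))$ for every $N$, which via the direct-image decomposition and the projection formula descend to injections $H^p(X,\mathcal F)\hookrightarrow H^p(X,\mathcal F\otimes\mathcal O_X(ND))$; the target vanishes for $p>0$ and $N\gg0$ by Serre vanishing, so $R^p\pi_*\mathcal F=H^p(X,\mathcal F)=0$ for $p>0$. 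If $\dim V>0$, take a general very ample $A_V$ on $V$; by (i) the sequence $0\to\mathcal F\otimes\mathcal O_X(-\pi^*A_V)\to\mathcal F\to\mathcal F|_{\pi^{-1}(A_V)}\to0$ is exact, and the data restricted to $\pi^{-1}(A_V)\to A_V$ again satisfy the hypotheses — the new relatively ample class $H|_{\pi^{-1}(A_V)}-\pi^*(A_V|_{A_V})$ staying relatively ample because only a relatively trivial divisor is subtracted — so by induction the higher direct images of $\mathcal F|_{\pi^{-1}(A_V)}$ under the restriction of $\pi$ vanish. Pushing the sequence forward by $\pi$ and applying the projection formula, the long exact sequence shows multiplication by the defining section of $A_V$ is surjective onto $R^p\pi_*\mathcal F$ for all $p>0$; taking $A_V$ through any point of its support, Nakayama's lemma forces $R^p\pi_*\mathcal F=0$.

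The main obstacle is squarely the injectivity theorem itself: translating the hypothesis ``$L-(K_Y+\Delta)$ semi-ample'' into the existence of the relevant mixed Hodge structures — delicate precisely because $Y$ is allowed to be reducible, so one works with the cohomology with compact support of a whole stratification — and then extracting injectivity of the multiplication map from strictness of the Hodge and weight filtrations together with $E_1$-degeneration. The remaining, more bookkeeping-type difficulty is ensuring that the simple normal crossing condition, the relative (semi-)ampleness, and the $\mathbb R$-divisor structure all survive the reductions used above (Stein factorization for the $f$-semi-ample part, general hyperplane sections, compactification, flat base change), which is what makes these statements technically demanding to set up even though their homological skeleton goes back to Koll\'ar, Esnault--Viehweg, and Ambro.
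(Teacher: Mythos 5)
Your argument, as written, rests on two claims that are not established and are in fact the hard points. First, the injectivity statement you take as the ``one genuinely deep input'' is stronger than what the Hodge-theoretic method gives. You assume: if $L-(K_Y+\Delta)\sim_{\mathbb R}S+D$ with $S$ semi-ample and $D$ an arbitrary effective permissible Cartier divisor, then multiplication by $s_D$ is injective on all $H^q$. The statement actually provable from the mixed-Hodge-structure input (the paper's Theorem \ref{2}, whose hypothesis is that $D$ is supported in the \emph{fractional} boundary $B$ with $\lfloor B\rfloor=0$) is the Koll\'ar--Esnault--Viehweg--Ambro form (Theorem \ref{5.1}): $L\sim_{\mathbb R}K+\Delta+H$ with $H$ semi-ample and $tH\sim_{\mathbb R}D+D'$; the condition $tH\sim_{\mathbb R}D+D'$ is exactly what allows $D$ to be rescaled by $\varepsilon/t$ and absorbed into the fractional boundary after a log resolution. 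Your $D$ enters with integral coefficient on top of the semi-ample part and cannot be absorbed this way, so your version does not follow from the Hodge-theoretic theorem, and you give no argument for it. Luckily, in both places where you use it the divisor you add is ($\mathbb R$-linearly equivalent to) the pullback of an ample divisor, hence \emph{is} dominated by a multiple of the positivity, so the correctly stated theorem would suffice; but as written the cornerstone of your proof is an unproved strengthening.

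Second, your proof of (i) only works when $X$ is projective, and the reduction from a general algebraic variety $X$ --- the main new content of the theorem --- is missing. If you ``work locally'' and shrink $X$ to an affine set, then $Y$ is no longer proper and the injectivity theorem (a statement about \emph{proper} simple normal crossing pairs, proved by Hodge theory on compact varieties) cannot be applied to $Y$; if instead you keep a very ample $A$ on $X$ and Serre vanishing, you have tacitly assumed $X$ projective. Bridging this is not routine bookkeeping: one must compactify $f$ so that the compactified pair is still simple normal crossing (Bierstone--Milman, Bierstone--Vera Pacheco), extend the Cartier divisor $L$ across the boundary of a reducible, non-normal compactification --- the paper has to invoke Grothendieck's Quot scheme for this --- and then repair the failure of $L-(K_Y+\Delta)\sim_{\mathbb R}f^*H$ on the compactification by a correction divisor supported on the boundary. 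The same caveat applies to the compactification you call routine in (ii). Two further soft spots in (ii): the splitting $Rf_*\mathcal O_Y(L)\simeq\bigoplus_q R^qf_*\mathcal O_Y(L)[-q]$ is not a formal consequence of (i) ``by a standard argument'' --- in this generality it lies at the same depth as (ii) itself, and using it here risks circularity; the paper avoids it by comparing the Leray spectral sequences for $L$ and $L+A'$ and killing $E_2^{1q}$ with the injectivity theorem, the terms $E_2^{pq}$, $p\geq 2$, vanishing via the short exact sequences supplied by (i) and induction on $\dim X$. Also, in your hyperplane induction on $V$, Nakayama needs an admissible $A_V$ through a point of $\Supp R^p\pi_*\mathcal F$, while admissibility forces $A_V$ to avoid the images of strata and associated points; if that support consists of finitely many such image points the choice is impossible, so the argument does not close as stated (the paper instead twists by a sufficiently ample divisor on $V$ and reduces to the absolute case).
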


When $X$ is a divisor on a smooth variety $M$, Theorem \ref{main} 
is contained in \cite{ambro} and 
plays crucial roles in the theory of 
quasi-log varieties. 
For the details, see \cite[Chapter 3]{book} and \cite{fujino-qlog}. 
When $X$ is quasi-projective, 
it is proved in \cite[Section 7]{fuji-fuji}. 
Here, we need no extra assumptions on $X$. 
Therefore, Theorem \ref{main} is new. 
The theory of resolution of singularities 
for {\em{reducible}} varieties has recently been developing 
(cf.~\cite{bierstone-milman} and \cite{bierstone-p}). 
It refines several vanishing theorems in \cite{book}. 
It is one of the main themes of this paper. 
We will give a proof of Theorem \ref{main} in Section \ref{sec3}. 
Note that we do not treat {\em{normal crossing varieties}}. 
We only discuss {\em{simple}} normal crossing varieties because the theory of resolution of 
singularities for reducible varieties works well only for {\em{simple}} normal crossing 
varieties.  
We note that the fundamental theorems for the log minimal model program 
for log canonical pairs can be proved without using the theory of quasi-log varieties (cf.~\cite{fujino-non} 
and \cite{fujino-fund}). 
The case when $Y$ is smooth in Theorem \ref{main} is sufficient for \cite{fujino-non} and \cite{fujino-fund}. 
For that case, see \cite{fujino-on} and \cite[Sections 5 and 6]{fujino-fund}. 
Our proof of Theorem \ref{main} 
heavily depends on the theory of mixed Hodge structures on cohomology 
groups with compact support. 

\begin{say}[Hodge theoretic viewpoint]\label{hodge}
Let $X$ be a projective simple normal crossing variety with $\dim X=n$. 
We are mainly interested in $H^\bullet (X, \omega_X)$ or 
$H^\bullet (X, \omega_X\otimes L)$ for some 
line bundle $L$ on $X$. 
By the theory of mixed Hodge structures, 
$$
\Gr_F^nH^\bullet (X, \mathbb C)\simeq H^{\bullet -n}(X, \nu_*\omega_{X^\nu}), 
$$ 
where $\nu:X^\nu\to X$ is the normalization, 
and 
$$
\Gr_F^0H^{\bullet}(X, \mathbb C)\simeq H^\bullet(X, \mathcal O_X). 
$$
Note that $F$ is the Hodge filtration on the natural mixed Hodge 
structure on $H^\bullet (X, \mathbb Q)$. 
Let $D$ be a simple normal crossing divisor on $X$. 
Then we obtain 
$$
\Gr_F^nH^\bullet (X\setminus D, \mathbb C)\simeq H^{\bullet -n}(X, \nu_*\omega_{X^\nu}\otimes 
\mathcal O_X(D))
$$ 
and 
$$
\Gr_F^0H^\bullet _c(X\setminus D, \mathbb C)\simeq H^\bullet (X, \mathcal O_X(-D)). 
$$
Note that 
$$
\Gr_F^0H^\bullet (X\setminus D, \mathbb C)\simeq H^\bullet (X, \mathcal O_X) 
$$ 
and that 
$$
H^{\bullet-n}(X, \nu_*\omega_{X^\nu}\otimes \mathcal O_X(D))
\not\simeq H^{\bullet -n}(X, \omega_X\otimes \mathcal O_X(D)). 
$$
We also note that 
$H^\bullet _c(X\setminus D, \mathbb Q)$ need not be the dual vector space of 
$H^{2n-\bullet}(X\setminus D, 
\mathbb Q)$ when $X$ is not smooth. 
In this setting, 
we are interested in $H^\bullet (X, \omega_X(D))$ or 
$H^\bullet (X, \omega_X(D)\otimes L)$. Therefore, we consider the natural mixed Hodge 
structure on $H^\bullet _c(X\setminus D, \mathbb C)$ and 
take the dual vector space of 
$$
\Gr_F^0H^\bullet _c(X\setminus D, \mathbb C)\simeq H^\bullet (X, \mathcal O_X(-D))
$$ 
by the Serre duality. 
Then we obtain $H^{n-\bullet} (X, \omega_X(D))$. 
We note that if $L$ is semi-ample then we can reduce the problem 
to the case when $L$ is trivial by the usual covering trick. 
The above observation is crucial for 
our treatment of the vanishing theorems and the semi-positivity theorems in \cite{book} and \cite{fuji-fuji}. 
In this paper, we do not discuss the Hodge theoretic part of vanishing and semi-positivity 
theorems. 
We prove Theorem \ref{main} by assuming 
the Hodge theoretic injectivity 
theorem:~Theorem \ref{2}. 
For the details of the Hodge theoretic 
part, see \cite{book}, \cite{fuji-fuji}, and \cite{fujino-inj}. 
\end{say}

The author learned the following example from Kento Fujita. 

\begin{ex}\label{rem-fujita} 
Let $X_1=\mathbb P^2$ and let $C_1$ be a line on $X_1$. 
Let $X_2=\mathbb P^2$ and let 
$C_2$ be a smooth conic on $X_2$. 
We fix an isomorphism $\tau:C_1\to C_2$. By gluing 
$X_1$ and $X_2$ along $\tau:C_1\to C_2$, we obtain a simple normal crossing surface 
$X$ such that $-K_X$ is ample (cf.~\cite{fujita}). 
We can check that $X$ can not be embedded into any smooth varieties as a simple normal crossing 
divisor. 
\end{ex}

Example \ref{rem-fujita} shows that 
Theorem \ref{main} is not covered by the results in \cite{ambro}, \cite{fujino-mi}, and \cite{book}. 

\begin{rem}[{cf.~\cite[Proposition 3.65]{book}}]
We can construct a proper simple normal crossing variety 
$X$ with the following property. Let $f:Y\to X$ be a proper 
morphism from a simple normal crossing variety $Y$ such that 
$f$ induces an isomorphism 
$f|_V:V\simeq U$ where 
$V$ (resp.~$U$) is a dense Zariski open subset 
of $Y$ (resp.~$X$) which 
contains the generic point of any stratum of $Y$ (resp.~$X$). 
Then $Y$ is non-projective. 
Therefore, we can not directly use Chow's lemma to reduce our 
main theorem (cf.~Theorem \ref{main}) to the quasi-projective case 
(cf.~\cite[Section 7]{fuji-fuji}). 
\end{rem}

There exists another standard approach to various Kodaira type vanishing theorems. 
It is an analytic method (see, for example, \cite{fujino-ana} and \cite{fujino-ana2}). 
At the present time, the relationship between our Hodge theoretic approach 
and the analytic method is not clear. 

We summarize the contents of this paper. 
In Section \ref{sec2}, we collect some basic definitions and results 
for the study of simple normal crossing varieties and divisors on them. 
Section \ref{sec3} is the main part of this paper. 
It is devoted to the study of injectivity, torsion-free, and vanishing theorems 
for simple normal crossing pairs. 
We note that we do not prove the Hodge theoretic injectivity theorem:~Theorem \ref{2}. 
We just quote it from \cite{book} (see also \cite{fujino-inj}). 
Section \ref{sec-sdlt} is an easy application of 
the vanishing theorem in Section \ref{sec3}. 
We prove the basic properties of semi divisorial log terminal pairs in the sense of 
Koll\'ar. 
In Section \ref{sec-semi}, we explain 
our new semi-positivity theorem, which is a generalization of 
the Fujita--Kawamata semi-positivity theorem, without proof. 
It depends on the theory of variations of mixed Hodge structures on cohomology groups with 
compact support and is related to the 
results obtained in Section \ref{sec3}. 
Anyway, the vanishing theorem and the semi-positivity theorem discussed in this paper follow 
from the theory of mixed Hodge structures on cohomology groups with compact support. 

For various applications of Theorem \ref{main} and related topics, 
see \cite{book}, \cite{fuji-fuji}, \cite{fujino-slc}, \cite{fujino-semi}, \cite{fujino-inj}, and so on. 

\begin{ack}
The author was partially supported by the Grant-in-Aid for Young Scientists 
(A) $\sharp$20684001 from JSPS. 
He would like to thank Professors Takeshi Abe, Taro Fujisawa, and Shunsuke Takagi 
for discussions and useful comments. He would also like to 
thank Professor J\'anos Koll\'ar for giving him a preliminary version of his book \cite{kollar-book}. 
The main part of this paper is a revised version of Section 5 of the author's 
unpublished preprint \cite{fujino-mi}. 
Although he tried to publish \cite{fujino-mi}, 
its importance could have been misunderstood by the referee in 2007, when almost 
all the minimal modelers were mainly interested in the big development 
by Birkar--Cascini--Hacon--M\textsuperscript{c}Kernan. When he wrote \cite{fujino-mi} in Nagoya, he was 
partially supported by the Grant-in-Aid for Young Scientists (A) $\sharp$17684001 from JSPS and 
by the Inamori Foundation. 
\end{ack}

We will work over $\mathbb C$, the complex number field, throughout this paper. 
But we note that, by using the Lefschetz principle, all the results in this paper 
hold over an algebraically closed field $k$ of characteristic zero. 

\section{Preliminaries}\label{sec2}
First, we quickly recall basic definitions of 
divisors. We note that we have to 
deal with reducible algebraic schemes in this paper. 
For details, see, for example, \cite[Section 2]{hartshorne} and \cite[Section 7.1]{liu}. 

\begin{say}\label{21ne}
Let $X$ be a noetherian scheme with structure sheaf $\mathcal O_X$ 
and let $\mathcal K_X$ be the sheaf of total quotient rings of $\mathcal O_X$. 
Let $\mathcal K^*_X$ denote the (multiplicative) sheaf of invertible 
elements in $\mathcal K_X$, and $\mathcal O^*_X$ the sheaf of invertible elements in $\mathcal O_X$. 
We note that $\mathcal O_X\subset \mathcal K_X$ and 
$\mathcal O^*_X\subset \mathcal K^*_X$. 
\end{say}

\begin{say}[Cartier, $\mathbb Q$-Cartier, and 
$\mathbb R$-Cartier divisors]
A {\em{Cartier divisor}} $D$ on $X$ is a global section of $\mathcal K^*_X/\mathcal O^*_X$, 
that is, $D$ is an element of 
$H^0(X, \mathcal K^*_X/\mathcal O^*_X)$. 
A {\em{$\mathbb Q$-Cartier $\mathbb Q$-divisor}} (resp.~{\em{$\mathbb R$-Cartier 
$\mathbb R$-divisor}}) is an element of 
$H^0(X, \mathcal K^*_X/\mathcal O^*_X)\otimes _{\mathbb Z}\mathbb Q$ 
(resp.~$H^0(X, \mathcal K^*_X/\mathcal O^*_X)\otimes _{\mathbb Z}\mathbb R$).  
\end{say}

\begin{say}[Linear, $\mathbb Q$-linear, and $\mathbb R$-linear equivalence] 
Let $D_1$ and $D_2$ be two $\mathbb R$-Cartier $\mathbb R$-divisors on $X$. 
Then $D_1$ is {\em{linearly}} (resp.~{\em{$\mathbb Q$-linearly}}, or 
{\em{$\mathbb R$-linearly}}) {\em{equivalent}} to $D_2$, denoted by 
$D_1\sim D_2$ (resp.~$D_1\sim_{\mathbb Q}D_2$, or 
$D_1\sim _{\mathbb R}D_2$) if $$D_1=D_2+\sum _{i=1}^k r_i(f_i)$$ such that 
$f_i\in \Gamma (X, \mathcal K^*_X)$ and $r_i\in \mathbb Z$ 
(resp.~$r_i\in \mathbb Q$, or $r_i\in \mathbb R$) for 
every $i$. We note that 
$(f_i)$ is a {\em{principal Cartier divisor}} 
associated to $f_i$, 
that is, the image of $f_i$ by $\Gamma (X, \mathcal K^*_X)\to 
\Gamma (X, \mathcal K^*_X/\mathcal O^*_X)$.  
Let $f:X\to Y$ be a morphism. 
If there is an $\mathbb R$-Cartier $\mathbb R$-divisor 
$B$ on $Y$ such that 
$D_1\sim _{\mathbb R}D_2+f^*B$, then $D_1$ 
is said to be {\em{relatively $\mathbb R$-linearly equivalent}} to $D_2$. 
It is denoted by $D_1\sim _{\mathbb R, f}D_2$. 
\end{say}

\begin{say}[Supports]
Let $D$ be a Cartier divisor on $X$. 
The {\em{support}} of $D$, denoted by $\Supp D$, is 
the subset of $X$ consisting of points $x$ such that 
a local equation for $D$ is not in $\mathcal O^*_{X, x}$. The 
support of $D$ is a closed subset of $X$. 
\end{say}

\begin{say}[Weil divisors, $\mathbb Q$-divisors, and $\mathbb R$-divisors] 
Let $X$ be an equi-dimensional reduced separated 
algebraic scheme. We note that 
$X$ is not necessarily regular in codimension one. 
A {\em{\em{(}}Weil{\em{)}} divisor} 
$D$ on $X$ is a finite formal sum 
$$
\sum _{i=1}^nd_i D_i
$$
where $D_i$ is an irreducible reduced closed subscheme of $X$ of pure 
codimension one and $d_i$ is an integer for every $i$ such that 
$D_i\ne D_j$ for $i\ne j$.  

If $d_i\in \mathbb Q$ (resp.~$d_i\in \mathbb R$) for every $i$, 
then $D$ is called a {\em{$\mathbb Q$-divisor}} (resp.~{\em{$\mathbb R$-divisor}}). 
We define the {\em{round-up}} $\lceil D\rceil=\sum _{i=1}^r\lceil d_i\rceil D_i$ 
(resp.~the {\em{round-down}} $\lfloor D\rfloor =\sum_{i=1}^r\lfloor d_i\rfloor D_i$), 
where for every real number $x$, $\lceil x\rceil$ (resp.~$\lfloor x\rfloor$) is the integer 
defined by $x\leq \lceil x\rceil<x+1$ (resp.~$x-1<\lfloor x\rfloor \leq x$). 
The {\em{fractional part}} $\{D\}$ of 
$D$ denotes $D-\lfloor D\rfloor$. 
We define $D^{<1}=\sum _{d_i<1}d_i D_i$ and so on. 
We call $D$ a {\em{boundary}} 
$\mathbb R$-divisor if $0\leq d_i\leq 1$ for every $i$. 
\end{say}

Next, we recall the definition of {\em{simple normal crossing 
pairs}}. 

\begin{defn}[Simple normal crossing pairs] 
We say that the pair $(X, D)$ is {\em{simple normal crossing}} at 
a point $a\in X$ if $X$ has a Zariski open neighborhood $U$ of $a$ that can be embedded in a smooth 
variety 
$Y$, 
where $Y$ has regular system of parameters $(x_1, \cdots, x_p, y_1, \cdots, y_r)$ at 
$a=0$ in which $U$ is defined by a monomial equation 
$$
x_1\cdots x_p=0
$$ 
and $$
D=\sum _{i=1}^r \alpha_i(y_i=0)|_U, \quad  \alpha_i\in \mathbb R. 
$$ 
We say that $(X, D)$ is a {\em{simple normal crossing pair}} if it is simple normal crossing at every point of $X$. 
If $(X, 0)$ is a simple normal crossing pair, then $X$ is called a {\em{simple normal crossing 
variety}}. If $X$ is a simple normal crossing variety, then $X$ has only Gorenstein singularities. 
Thus, it has an invertible dualizing sheaf $\omega_X$. 
Therefore, we can define the {\em{canonical divisor $K_X$}} such that 
$\omega_X\simeq \mathcal O_X(K_X)$ (cf.~\cite[Section 7.1 Corollary 1.19]{liu}). 
It is a Cartier divisor on $X$ and is well-defined up to linear equivalence. 
\end{defn}

We note that a simple normal crossing pair is called a {\em{semi-snc pair}} 
in \cite[Definition 1.9]{kollar-book}. 

\begin{defn}[Strata and permissibility]\label{stra} 
Let $X$ be a simple normal crossing variety and let $X=\bigcup _{i\in I}X_i$ be the 
irreducible decomposition of $X$. 
A {\em{stratum}} of $X$ is an irreducible component of $X_{i_1}\cap \cdots \cap X_{i_k}$ for some 
$\{i_1, \cdots, i_k\}\subset I$. 
A Cartier divisor $D$ on $X$ is {\em{permissible}} if $D$ contains no strata of $X$ in its support. 
A finite $\mathbb Q$-linear (resp.~$\mathbb R$-linear) combination of permissible 
Cartier divisors is called a {\em{permissible $\mathbb Q$-divisor}} (resp.~{\em{$\mathbb R$-divisor}}) on 
$X$. 
\end{defn}

\begin{say}
Let $X$ be a simple normal crossing variety. 
Let $\PerDiv (X)$ be the abelian group generated by permissible 
Cartier divisors on $X$ and let $\Weil (X)$ be the abelian group 
generated by Weil divisors on $X$. 
Then we can define natural injective homomorphisms of abelian groups 
$$
\psi:\PerDiv (X)\otimes _{\mathbb Z}\mathbb K\to \Weil (X)\otimes _{\mathbb Z}\mathbb K 
$$ 
for $\mathbb K=\mathbb Z$, $\mathbb Q$, and $\mathbb R$. 
Let $\nu:\widetilde X\to X$ be the normalization. 
Then we have the following commutative diagram. 
$$
\xymatrix{
\Div (\widetilde X)\otimes _{\mathbb Z}\mathbb K 
\ar[r]^{\sim}_{\widetilde \psi}& \Weil (\widetilde X)\otimes 
_{\mathbb Z}\mathbb K\ar[d]^{\nu_*} \\
\PerDiv (X)\otimes _{\mathbb Z}\mathbb K\ar[r]_{\psi}
\ar[u]^{\nu^*}&\Weil (X)\otimes _{\mathbb Z}\mathbb K
}
$$ 
Note that $\Div (\widetilde X)$ is the abelian group generated by 
Cartier divisors on $\widetilde X$ and 
that $\widetilde \psi$ is an isomorphism 
since $\widetilde X$ is smooth. 

By $\psi$, every permissible Cartier (resp.~$\mathbb Q$-Cartier 
or $\mathbb R$-Cartier) divisor can be considered 
as a Weil divisor (resp.~$\mathbb Q$-divisor or $\mathbb R$-divisor). 
Therefore, various operations, 
for example, $\lfloor D\rfloor$, $D^{<1}$, and so on, 
make sense for a permissible 
$\mathbb R$-Cartier $\mathbb R$-divisor $D$ on $X$. 
\end{say}

We note the following easy example. 

\begin{ex}
Let $X$ be a simple normal crossing variety 
in $\mathbb C^3=\Spec \mathbb C[x, y, z]$ defined 
by $xy=0$. 
We set $D_1=(x+z=0)\cap X$ and 
$D_2=(x-z=0)\cap X$. 
Then $D=\frac{1}{2} D_1+\frac{1}{2}D_2$ is a permissible 
$\mathbb Q$-Cartier $\mathbb Q$-divisor 
on $X$. 
In this case, $\lfloor D\rfloor=(x=z=0)$ on $X$. 
Therefore, $\lfloor D\rfloor$ is not a Cartier divisor on $X$. 
\end{ex}

\begin{defn}[Simple normal crossing divisors]
Let $X$ be a simple normal crossing variety and 
let $D$ be a Cartier divisor on $X$. 
If $(X, D)$ is a simple normal crossing pair and $D$ is reduced, 
then $D$ is called a {\em{simple normal crossing divisor}} on $X$. 
\end{defn}

\begin{rem}\label{211ne}
Let $X$ be a simple normal crossing variety and let $D$ be a 
$\mathbb K$-divisor on $X$ where 
$\mathbb K=\mathbb Q$ or $\mathbb R$. 
If $\Supp D$ is a simple normal crossing divisor on $X$ and $D$ is $\mathbb K$-Cartier, 
then $\lfloor D\rfloor$ and $\lceil D\rceil$ (resp.~$\{D\}$, $D^{<1}$, 
and so on) are Cartier (resp.~$\mathbb K$-Cartier) divisors on $X$ (cf.~\cite[Section 8]{bierstone-p}). 
\end{rem}

The following lemma is easy but important. 

\begin{lem}\label{7}
Let $X$ be a simple normal crossing variety and 
let $B$ be a permissible $\mathbb R$-Cartier $\mathbb R$-divisor on 
$X$ such that $\lfloor B\rfloor=0$. 
Let $A$ be a Cartier divisor on $X$. 
Assume that 
$A\sim _{\mathbb R}B$. Then 
there exists a permissible $\mathbb Q$-Cartier $\mathbb Q$-divisor 
$C$ on $X$ such that $A\sim _{\mathbb Q}C$, $\lfloor C\rfloor =0$, and 
$\Supp C=\Supp B$. 
\end{lem}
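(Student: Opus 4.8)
The plan is to write $A \sim_{\mathbb R} B$ explicitly as $A = B + \sum_{i=1}^k r_i (f_i)$ with $f_i \in \Gamma(X, \mathcal K^*_X)$ and $r_i \in \mathbb R$, and then perturb the coefficients $r_i$ to nearby rational numbers so that the resulting divisor $C$ is $\mathbb Q$-linearly equivalent to $A$, still permissible, still has floor zero, and has the same support as $B$. The point is that all three conditions to be preserved — permissibility, $\lfloor \cdot \rfloor = 0$, and $\Supp = \Supp B$ — are open conditions on the coefficient vector $(r_1, \dots, r_k) \in \mathbb R^k$, provided we set things up on the right ambient object.

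First I would reduce to the normal crossing situation where the operations $\lfloor \cdot \rfloor$ etc. are well-behaved. Since $B$ is a permissible $\mathbb R$-Cartier $\mathbb R$-divisor, view it via $\psi$ as a Weil $\mathbb R$-divisor; the divisors $(f_i)$ are likewise permissible Cartier divisors (each $f_i$ is a non-zero-divisor in $\mathcal K_X$, so $(f_i)$ contains no stratum). Let $S = \Supp B \cup \bigcup_i \Supp (f_i)$, a Weil divisor on $X$ whose irreducible components I enumerate as $S_1, \dots, S_m$. I may choose $B$ (after absorbing some $(f_i)$ if necessary) and the $(f_i)$ so that $S$ is in fact a reduced simple normal crossing divisor on $X$ in the sense of the preceding definitions — actually this requires care, so instead I work on the normalization: let $\nu: \widetilde X \to X$ be the normalization, so $\widetilde X$ is smooth, and pull everything back. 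On $\widetilde X$ the conditions "$\lfloor \nu^* C\rfloor = 0$" and "$\Supp \nu^* C = \Supp \nu^* B$" are genuinely open and combinatorial in the coefficients, and $\nu_*$ recovers the statement downstairs since $\nu^*$ is injective by the commutative diagram in the excerpt.

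The main step is the perturbation. Consider the affine-linear map $\mathbb R^k \to \Weil(\widetilde X) \otimes \mathbb R$, $(s_1,\dots,s_k) \mapsto \nu^*B + \sum_i s_i \nu^*(f_i) = \nu^*A + \sum_i (s_i - r_i)\nu^*(f_i)$; evaluate the coefficient of each component $\widetilde S_j$ of $\nu^* S$ to get affine-linear functions $\ell_j(s)$ of $s \in \mathbb R^k$. At $s = r$ we have $0 \le \ell_j(r) < 1$ for all $j$ with equality $\ell_j(r) = 0$ exactly when $\widetilde S_j \not\subset \Supp \nu^*B$. I want a rational point $s = q \in \mathbb Q^k$ near $r$ with: $\ell_j(q) \ge 0$ for all $j$ (permissibility, i.e.\ the divisor is effective... more precisely boundary-type — actually permissibility is automatic since every component lies in $S$ which contains no stratum, so the only real conditions are the next two); $\ell_j(q) < 1$ for all $j$ (so $\lfloor \cdot \rfloor = 0$); and $\ell_j(q) = 0$ whenever $\ell_j(r) = 0$, while $\ell_j(q) > 0$ whenever $\ell_j(r) > 0$ (so the support is unchanged). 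The equalities $\ell_j(r) = 0$ cut out a rational affine subspace $W \subseteq \mathbb R^k$ (rational because the $\nu^*(f_i)$ and $\nu^* B$ have rational — indeed integral, up to the $r_i$ — structure, so the linear parts of the $\ell_j$ are defined over $\mathbb Q$ and the constant terms come from $\nu^* B$; one checks $W$ is nonempty and rational since $r \in W$ and $W$ is defined by equations with rational coefficients whose constant terms are the ($\mathbb Q$-valued when $B$ is... hmm)). The subtle point — and the one I expect to be the main obstacle — is verifying that $W$ really is defined over $\mathbb Q$: this needs that the constant terms $\ell_j(0)$, which are the coefficients of $\nu^* B$ along the $\widetilde S_j$, together with the integer matrix of linear parts, give a rational linear system; since $B$ itself need not have rational coefficients this is not obvious, but it follows because on $W$ the constraint "$\ell_j = 0$ for $j \in J_0$" forces, for $j \in J_0$, $0 = \ell_j(r)$, i.e. $r$ already satisfies a rational-coefficient system with rational right-hand side (the right-hand sides are $0$), so $W$ is a rational affine subspace and $\mathbb Q^k \cap W$ is dense in $W$.

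Finally I pick $q \in \mathbb Q^k \cap W$ sufficiently close to $r$: density in $W$ gives such $q$, and for $q$ close enough all the remaining strict inequalities $\ell_j(q) > 0$ ($j \notin J_0$) and $\ell_j(q) < 1$ (all $j$), which hold at $r$, persist by continuity. Set $C := \nu_*\big(\nu^* B + \sum_i q_i \nu^*(f_i)\big)$, equivalently $C = B + \sum_i q_i (f_i)$ on $X$ directly. Then $C - A = \sum_i (q_i - r_i)(f_i) + (B - A) = \sum_i q_i (f_i) - \sum_i r_i(f_i) + \sum_i r_i (f_i)$... cleanly: $A = B + \sum r_i(f_i)$ and $C = B + \sum q_i(f_i)$ give $C - A = \sum_i (q_i - r_i)(f_i)$ with $q_i - r_i \in \mathbb R$; but we want $\mathbb Q$-linear equivalence. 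Re-derive: $C = A - \sum_i r_i (f_i) + \sum_i q_i(f_i) = A + \sum_i(q_i - r_i)(f_i)$; this only shows $C \sim_{\mathbb R} A$. To fix this I instead keep $A \sim_{\mathbb Q} C$ by writing $C = A + \sum_i t_i (f_i)$ and demanding $t_i = q_i - r_i \in \mathbb Q$, which forces $q_i \in r_i + \mathbb Q$; so I should perturb within the rational-translate lattice $r + \mathbb Q^k$ rather than in $\mathbb Q^k$ — and $(r + \mathbb Q^k) \cap W$ is still dense in $W$ since $r \in W$ and $W$ is a $\mathbb Q$-rational affine subspace through $r$, so $W = r + (\text{rational linear subspace})$ and $(r + \mathbb Q^k) \cap W \supseteq r + (\mathbb Q^k \cap \vec W)$ is dense. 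With this corrected choice of $q$, $C \sim_{\mathbb Q} A$, $C$ is permissible $\mathbb Q$-Cartier (it is a $\mathbb Q$-combination of permissible Cartier divisors), $\lfloor C \rfloor = 0$ by the strict upper bounds, $C \ge 0$... and $\Supp C = \Supp B$ by the choice of $J_0$, completing the proof.
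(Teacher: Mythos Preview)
Your underlying strategy---write the $\mathbb R$-linear equivalence out as $B=A+\sum r_i(f_i)$ and then move the $r_i$ to nearby rationals while keeping the support and coefficient constraints---is exactly the paper's approach. But your execution has a base-point confusion that propagates and produces a spurious difficulty and a flawed ``fix.''

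You write $A=B+\sum r_i(f_i)$ and parametrize $s\mapsto \nu^*B+\sum s_i\,\nu^*(f_i)$, yet assert that at $s=r$ the coefficients $\ell_j(r)$ lie in $[0,1)$ and vanish exactly off $\Supp\nu^*B$. That is false: with your conventions, $s=r$ gives $\nu^*A$, whose coefficients are integers; the point giving $B$ is $s=0$. This same slip reappears when you argue that $W$ is rational (``$r$ already satisfies a rational-coefficient system'') and again when you claim $r\in W$ to justify perturbing in $r+\mathbb Q^k$: there is no reason that $\Supp A\subset\Supp B$, so in your parametrization $r$ need not lie in $W$. Also, you want $q$ close to the point giving $B$ (namely $s=0$), not close to $r$.

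The paper avoids all of this by reversing the roles: it writes $B=A+\sum r_i(f_i)$ and parametrizes $a\mapsto A+\sum a_i(f_i)$. Because $A$ is Cartier (integral coefficients) and the $(f_i)$ are principal, the conditions ``permissible'' and ``support contained in $\Supp B$'' cut out affine subspaces of $\mathbb R^k$ that are \emph{automatically} defined over $\mathbb Q$; one then takes a rational point $s\in\mathbb Q^k$ in that subspace close to $r$ and sets $C=A+\sum s_i(f_i)$. This gives $C\sim_{\mathbb Q}A$ for free, with no need for your $r+\mathbb Q^k$ detour, and the openness argument handles $\lfloor C\rfloor=0$ and $\Supp C=\Supp B$ exactly as you describe. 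The passage to the normalization is also unnecessary: the paper tests permissibility and support directly via the localizations $\mathcal O_{X,P}$ at generic points $P$ of strata.
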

\begin{proof}
We can write $B=A+\sum _{i=1}^k r_i (f_i)$, 
where $f_i\in \Gamma (X, \mathcal K^* _X)$ and 
$r_i\in \mathbb R$ for every $i$. 
Here, $\mathcal K_X$ is the sheaf of 
total quotient rings of $\mathcal O_X$ (see \ref{21ne}). 
Let $P\in X$ be a scheme theoretic point corresponding to 
some stratum of $X$. 
We consider the following affine map 
$$
\mathbb K^k\to H^0(X_P, \mathcal K^*_{X_P}/\mathcal O^{*}_{X_P})\otimes _{\mathbb Z} \mathbb K
$$ 
given by $(a_1, \cdots, a_k)\mapsto A+\sum _{i=1}^k a_i (f_i)$, where 
$X_P=\Spec \mathcal O_{X, P}$ and $\mathbb K=\mathbb Q$ or $\mathbb R$. 
Then we can check that 
$$
\mathcal P=\{(a_1, \cdots, a_k)\in \mathbb R^k\,|\, A+\sum _i a_i (f_i)\ 
\text{is permissible}\}\subset \mathbb R^k
$$ 
is an affine subspace of $\mathbb R^k$ defined over $\mathbb Q$. 
Therefore, we see that 
$$
\mathcal S=\{(a_1, \cdots, a_k)\in 
\mathcal P\, |\, \Supp (A+\sum _i a_i (f_i))\subset \Supp B\}\subset 
\mathcal P 
$$ is an affine subspace of $\mathbb R^k$ defined over $\mathbb Q$. 
Since $(r_1, \cdots, r_k)\in \mathcal S$, we know 
that $\mathcal S\ne \emptyset$. 
We take a point $(s_1, \cdots, s_k)\in \mathcal S \cap \mathbb Q^k$ which 
is general in $\mathcal S$ and sufficiently close to 
$(r_1, \cdots, r_k)$ and set $C=A+\sum _{i=1}^k s_i (f_i)$. By construction, 
$C$ is a permissible $\mathbb Q$-Cartier $\mathbb Q$-divisor such that 
$C\sim _{\mathbb Q}A$, $\lfloor C\rfloor =0$, and 
$\Supp C=\Supp B$. 
\end{proof}

We need the following important definition in Section \ref{sec3}. 

\begin{defn}[Strata and permissibility for pairs]
Let $(X, D)$ be a simple normal crossing pair such that 
$D$ is a boundary $\mathbb R$-divisor 
on $X$. 
Let $\nu:X^\nu \to X$ be the normalization. 
We define $\Theta$ by the formula 
$$
K_{X^\nu}+\Theta=\nu^*(K_X+D). 
$$ 
Then a {\em{stratum}} of $(X, D)$ is an irreducible component of $X$ or the $\nu$-image 
of a log canonical center of $(X^\nu, \Theta)$. 
We note that $(X^\nu, \Theta)$ is log canonical.  
When $D=0$, 
this definition is compatible with Definition \ref{stra}. 
An $\mathbb R$-Cartier $\mathbb R$-divisor $B$ on $X$ is {\em{permissible with 
respect to $(X, D)$}} if $B$ contains no strata of $(X, D)$ in its support. 
If $B$ is a permissible $\mathbb R$-Cartier $\mathbb R$-divisor 
with respect to $(X, D)$, 
then we can easily check that 
$$
B=\sum _i b_i B_i
$$
where $B_i$ is a permissible Cartier divisor with respect to $(X, D)$ and $b_i\in \mathbb R$ for 
every $i$ (cf.~Proof of Lemma \ref{7}).  
\end{defn}

For the reader's convenience, we recall Grothendieck's Quot scheme. 
For the details, see, for example, \cite[Theorem 5.14]{fga} and 
\cite[Section 2]{ak}. 
We will use it in the proof of the main theorem:~Theorem \ref{8}. 

\begin{thm}[Grothendieck]\label{quot-s} 
Let $S$ be a noetherian scheme, 
let $\pi: X\to S$ be a projective morphism, and 
let $L$ be a relatively very ample line bundle on $X$. 
Then for any coherent $\mathcal O_X$-module $E$ 
and any polynomial $\Phi\in \mathbb Q[\lambda]$, 
the functor $\mathfrak{Quot}^{\Phi, L}_{E/X/S}$ 
is representable by a projective $S$-scheme 
$\mathrm{Quot}^{\Phi, L}_{E/X/S}$. 
\end{thm}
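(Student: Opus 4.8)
The plan is to follow Grothendieck's original strategy and realize $\mathrm{Quot}^{\Phi,L}_{E/X/S}$ as a closed subscheme of an auxiliary Grassmannian over $S$.

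First I would reduce to the case of projective space over $S$. Since $L$ is relatively very ample, after twisting $E$ by a sufficiently positive power $L^{\otimes c}$ (which only translates $\Phi$ and replaces $E$ by $E\otimes L^{\otimes c}$) and replacing $X$ by its image under the corresponding closed $S$-immersion $i\colon X\hookrightarrow \mathbb P^n_S$ with $i^*\mathcal O_{\mathbb P^n_S}(1)=L$, the functor $\mathfrak{Quot}^{\Phi,L}_{E/X/S}$ is identified, via pushforward along $i$, with the functor of quotients of $i_*E$ on $\mathbb P^n_S$ having Hilbert polynomial $\Phi$ (a quotient is $T$-flat on $X_T$ iff its pushforward is $T$-flat on $\mathbb P^n_T$, since $i$ is finite). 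Choosing a surjection onto $i_*E$ from a finite direct sum of negative twists $\bigoplus_j \mathcal O_{\mathbb P^n_S}(-a_j)$ lets me further assume, if convenient, that $E$ is such a direct sum; in any case we may assume $X=\mathbb P^n_S$ with a fixed coherent $E$.

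The crucial input is uniform boundedness of the family. I would invoke Castelnuovo--Mumford $m$-regularity: there is an integer $m_0$ depending only on $n$, on $\Phi$, and on $E$ such that for every $S$-scheme $T$ and every $T$-flat quotient $q\colon E_T\twoheadrightarrow \mathcal G$ on $\mathbb P^n_T$ with Hilbert polynomial $\Phi$, both $\mathcal G$ and $\ker q$ are $m$-regular for all $m\ge m_0$. This forces $\pi_{T*}(\mathcal G(m))$ to be locally free of rank $\Phi(m)$, to commute with arbitrary base change, and to generate $\mathcal G(m)$; likewise $\mathcal H:=\pi_*(E(m))$ is locally free on $S$ for $m\ge m_0$ (after enlarging $m_0$). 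Applying $\pi_{T*}\big((-)(m)\big)$ to $q$ therefore yields a locally free rank-$\Phi(m)$ quotient of $\mathcal H_T$, i.e.\ a natural transformation
$$
\mathfrak{Quot}^{\Phi,L}_{E/X/S}\longrightarrow \mathrm{Grass}\big(\mathcal H,\Phi(m)\big),
$$
and the Grassmannian on the right is representable by a projective $S$-scheme by the standard elementary construction.

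Finally I would check that this natural transformation is representable by a closed immersion, which gives the theorem. Over $G:=\mathrm{Grass}(\mathcal H,\Phi(m))$ there is a universal locally free quotient $\mathcal H_G\twoheadrightarrow \mathcal R$; pulling back to $\mathbb P^n_G$, comparing the evaluation map $\pi_G^*\pi_{G*}(E(m))(-m)\to E_G$ with $\pi_G^*\mathcal R(-m)$, I obtain a coherent sheaf $\mathcal F$ on $\mathbb P^n_G$ — a candidate universal quotient of $E_G$ — together with the locus where $\mathcal F$ is flat over $G$ with Hilbert polynomial $\Phi$. By the theory of flattening stratifications, applied to $\mathcal F$ and its first few twists and combined with the regularity bound, this locus is a closed subscheme $Q\subseteq G$, and a base-change/Nakayama argument shows that $Q$ together with $\mathcal F|_{\mathbb P^n_Q}$ represents $\mathfrak{Quot}^{\Phi,L}_{E/X/S}$; being closed in a projective $S$-scheme, $Q$ is projective over $S$. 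The main obstacle is the uniform boundedness step: one must produce a single $m_0$ working simultaneously for the entire, a priori unbounded, family of quotients, which is exactly what Mumford's regularity estimates (or Grothendieck's ``bounded complexes'' argument) supply. Once boundedness is in hand, the passage to the Grassmannian and the flattening-stratification step are comparatively formal, though identifying $Q$ scheme-theoretically with the representing object still requires the careful cohomology-and-base-change bookkeeping indicated above.
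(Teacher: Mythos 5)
The paper does not prove Theorem \ref{quot-s}; it quotes it as Grothendieck's classical theorem and refers to \cite[Theorem 5.14]{fga} and \cite[Section 2]{ak} for details. Your sketch follows exactly the construction carried out in those references: reduction to $\mathbb P^n_S$ via the relatively very ample $L$, uniform Castelnuovo--Mumford regularity to bound the family of quotients with Hilbert polynomial $\Phi$, the resulting embedding of the Quot functor into a Grassmannian of locally free quotients of $\pi_*(E(m))$, and a flattening-stratification plus cohomology-and-base-change argument to identify the representing subscheme. In outline this is the right, standard proof.

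The one step that does not work as you state it is the assertion that the flattening stratification, ``combined with the regularity bound,'' exhibits the relevant locus as a \emph{closed} subscheme $Q$ of the Grassmannian $G$. The stratum of the flattening stratification attached to the fixed Hilbert polynomial $\Phi$ is in general only locally closed, and the regularity bound controls the members of the family, not their limits: nothing in your argument excludes that a point of the closure of $Q$ in $G$ carries a candidate sheaf with a different Hilbert polynomial. In the treatments the paper cites, this is repaired by a separate properness argument: one verifies the valuative criterion for $Q\to S$ using existence and uniqueness of flat limits over a discrete valuation ring (extend the kernel of the given quotient over the generic fibre to its saturation, i.e.\ the subsheaf whose quotient is flat over the DVR), and then $Q$, being proper and quasi-projective over $S$, is projective. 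A direct closed-condition description inside the Grassmannian is possible, but it needs a genuinely stronger input (Gotzmann-type persistence), not just Mumford's regularity estimate. With the properness step added, your proof is complete and coincides with the argument in \cite{fga} and \cite{ak}.
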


\section{Vanishing theorems}\label{sec3}

Let us start with the following injectivity theorem (cf.~\cite[Proposition 3.2]{fujino-mi} and 
\cite[Proposition 2.23]{book}). 
The proof of Theorem \ref{2} in \cite{book} is purely Hodge theoretic. 
It uses the theory of mixed Hodge structures on cohomology groups with compact support 
(cf.~\ref{hodge}). 
For the details, see \cite[Chapter 2]{book} and \cite{fujino-inj}. 

\begin{thm}[Hodge theoretic injectivity theorem]\label{2}
Let $(X, S+B)$ be a simple normal crossing 
pair such that $X$ is proper, $S+B$ is a boundary $\mathbb R$-divisor, 
$S$ is reduced, and 
$\lfloor B\rfloor =0$. 
Let $L$ be a Cartier divisor on $X$ and let $D$ be an 
effective Cartier divisor whose support is contained 
in $\Supp B$. 
Assume that 
$L\sim_{\mathbb R}K_X+S+B$. Then 
the natural homomorphisms 
$$
H^q(X, \mathcal O_X(L))\to H^q(X, \mathcal O_X(L+D)), 
$$ 
which are induced by the inclusion 
$\mathcal O_X\to \mathcal O_X(D)$, 
are injective for all $q$. 
\end{thm}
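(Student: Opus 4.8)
The plan is to give a genuinely Hodge-theoretic proof of the injectivity, organised around the dictionary recorded in \ref{hodge}, rather than to appeal to any injectivity statement itself.

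\emph{Step 1: reduction to rational coefficients.} Since $S$ is a reduced simple normal crossing divisor it is Cartier (Remark~\ref{211ne}), so $A:=L-(K_X+S)$ is a Cartier divisor with $A\sim_{\mathbb R}B$, while $B$ is permissible with $\lfloor B\rfloor=0$. Lemma~\ref{7} then produces a permissible $\mathbb Q$-Cartier $\mathbb Q$-divisor $B'$ with $A\sim_{\mathbb Q}B'$, $\lfloor B'\rfloor=0$, and $\Supp B'=\Supp B$. Replacing $B$ by $B'$, I may assume $L\sim_{\mathbb Q}K_X+S+B$ with $B=\sum_j b_jB_j$, $0<b_j<1$ rational. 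Put $T=S+\sum_jB_j$, the reduced simple normal crossing divisor $\Supp(S+B)$, and record the two facts that drive the whole argument: $\Supp D\subseteq\Supp B\subseteq T$, and, because $D$ is an \emph{integral} Cartier divisor, $D=\sum_j e_jB_j$ with $e_j\in\mathbb Z_{\geq 0}$, so $B$ and $B+D$ have the same fractional part along $T$.

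\emph{Step 2: passage to compactly supported cohomology.} By Serre duality on the Gorenstein proper variety $X$ of dimension $n$, the asserted injectivity of $H^q(X,\mathcal O_X(L))\to H^q(X,\mathcal O_X(L+D))$ is equivalent to the surjectivity of the dual map $H^{n-q}(X,\mathcal O_X(K_X-L-D))\to H^{n-q}(X,\mathcal O_X(K_X-L))$ induced by $\mathcal O_X(-D)\hookrightarrow\mathcal O_X$, where $K_X-L\sim_{\mathbb Q}-S-B$. By the twisted form of the computation recalled in \ref{hodge}, each of these groups is the lowest Hodge graded piece $\Gr^0_F$ of a cohomology group with compact support of the open variety $U=X\setminus T$, with coefficients in the rank-one unitary local system $\mathbb V$ whose monodromy around $B_j$ is $\exp(-2\pi\sqrt{-1}\,b_j)$ and which is trivial (honest logarithmic pole) along the reduced part $S$. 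The decisive consequence of Step~1 is that $L$ and $L+D$ give rise to the \emph{same} $\mathbb V$ (the fractional data are unchanged), so the dual map is $\Gr^0_F$ of a morphism $\phi$ of the mixed Hodge structures attached to the two integral normalisations of $H^{n-q}_c(U,\mathbb V)$, realised concretely by multiplication by the section $s$ cutting out $D$.

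\emph{Step 3: strictness and the Hodge-theoretic input.} At this point the problem is purely one of mixed Hodge theory, and I would finish by Koll\'ar's principle. The engine is the degeneration at $E_1$ of the Hodge-to-de Rham spectral sequence of the twisted logarithmic de Rham complex computing $H^\bullet_c(U,\mathbb V)$, which amounts to the strict compatibility of the Hodge filtration $F$ with the morphism $\phi$. Since $s$ is a unit on $U$ (where $D$ is trivial), $\phi$ alters $H^\bullet_c(U,\mathbb V)$ only through the weight filtration supported on the boundary $T$; strictness of $F$ then transfers the required surjectivity to the single graded piece $\Gr^0_F\phi$. Dualising back through \ref{hodge} and Serre duality yields the injectivity claimed, for every $q$. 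The genuinely delicate bookkeeping here is to set up the morphism of mixed Hodge structures so that its lowest graded piece is exactly the dual of the map of the theorem; this is where strictness is used without claiming surjectivity of $\phi$ on the whole of $H^\bullet_c(U,\mathbb V)$.

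\textbf{Main obstacle.} The serious difficulty is making Step~3 work for a \emph{reducible} simple normal crossing variety $X$. The degeneration and strictness are classical (Deligne, Esnault--Viehweg) only when $X$ is smooth, whereas here one must first construct the natural mixed Hodge structure on $H^\bullet_c(X\setminus T,\mathbb V)$ for singular $X$ and prove that its Hodge filtration is strict; this is carried out, via a cubical hyperresolution compatible with the unitary twist, in the Hodge-theoretic references (\cite{book}, \cite{fujino-inj}), and I would import it as the single external input. An equivalent and more elementary route eliminates the fractional coefficients by a Kummer cyclic cover $\pi\colon X'\to X$ branched along $T$, proves the integral statement on $X'$, and descends by extracting the eigen-summand of $\pi_*\omega_{X'}$ on which the Galois group acts through the character determined by $B$; the two routes are equivalent.
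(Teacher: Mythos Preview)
The paper does \emph{not} prove Theorem~\ref{2}. It says so explicitly in the introduction (``we do not prove the Hodge theoretic injectivity theorem:~Theorem \ref{2}'') and again at the start of Section~\ref{sec3}; the statement is imported wholesale from \cite{book} and \cite{fujino-inj}. So there is no in-paper proof to compare against, and in that sense your proposal already goes further than the paper does.

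Your outline follows exactly the philosophy recorded in \ref{hodge} and is the shape of the argument in the cited references: reduce to $\mathbb Q$-coefficients via Lemma~\ref{7}, dualise by Serre duality to a surjectivity statement for $\Gr_F^0$ of compactly supported cohomology of $U=X\setminus T$ with coefficients in a rank-one unitary local system, and then invoke $E_1$-degeneration/strictness. The cyclic-cover alternative you mention at the end is likewise standard and is how the unitary twist is handled in practice.

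That said, Step~3 as written is not yet a proof, and the imprecision is not only the reducibility issue you flag. You assert that $H^{n-q}(X,\mathcal O_X(K_X-L))$ and $H^{n-q}(X,\mathcal O_X(K_X-L-D))$ are ``two integral normalisations'' of the \emph{same} $H^{n-q}_c(U,\mathbb V)$, and that strictness of $F$ then yields surjectivity on $\Gr_F^0\phi$ ``without claiming surjectivity of $\phi$ on the whole''. But $H^{n-q}_c(U,\mathbb V)$ carries a single canonical mixed Hodge structure, hence a single $\Gr_F^0$; if both sheaf cohomology groups were literally that graded piece, they would already be isomorphic and there would be nothing to prove. What actually happens in the references is that one realises $Rj_!\mathbb V$ by two different twisted log de~Rham complexes on $X$ --- schematically $\Omega^\bullet_X(\log T)(-T)$ and $\Omega^\bullet_X(\log T)(-T-D)$, suitably twisted --- which are quasi-isomorphic because $D\subset T$; the degree-zero terms of these complexes are precisely $\mathcal O_X(K_X-L)$ and $\mathcal O_X(K_X-L-D)$. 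The substantive input is then $E_1$-degeneration for \emph{both} complexes (equivalently, that both stupid filtrations induce the Hodge filtration on hypercohomology), which forces the map on $E_1^{0,\bullet}$-terms to be an isomorphism. Establishing this degeneration for the second, non-standard complex --- and doing so when $X$ is a simple normal crossing variety rather than smooth --- is exactly the work carried out in \cite[Chapter~2]{book} and \cite{fujino-inj}; your sentence ``strictness of $F$ then transfers the required surjectivity'' hides this step rather than performing it. Since you explicitly say you would import this from the references, your proposal is an accurate roadmap, but you should be aware that the gap you identify as the ``main obstacle'' and the gap in your own Step~3 are one and the same.
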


\begin{rem} 
In \cite{fujino-inj}, we prove a slight generalization of Theorem \ref{2}. However, Theorem \ref{2} is sufficient 
for the proof of Theorem \ref{5.1} below. 
\end{rem}

The next lemma is an easy generalization of the vanishing theorem of Reid--Fukuda 
type for simple normal crossing pairs, which is a very special case of Theorem \ref{8} (i). 
However, we need Lemma \ref{rf} for our proof of Theorem \ref{8}.  

\begin{lem}[Relative vanishing lemma]\label{rf} 
Let $f:Y\to X$ be a proper morphism 
from a simple normal crossing 
pair $(Y, \Delta)$ to an algebraic variety $X$ 
such that $\Delta$ is a boundary 
$\mathbb R$-divisor on $Y$. 
We assume that $f$ is an isomorphism 
at the generic point of any stratum of the pair $(Y, \Delta)$. 
Let $L$ be a Cartier divisor on $Y$ such 
that $L\sim _{\mathbb R, f}K_Y+\Delta$. 
Then $R^qf_*\mathcal O_Y(L)=0$ for every $q>0$. 
\end{lem}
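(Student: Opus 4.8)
The plan is to reduce this relative statement to the Hodge theoretic injectivity theorem (Theorem \ref{2}) by a standard argument that localizes on $X$ and controls the cohomology sheaves $R^qf_*\mathcal O_Y(L)$ via a Leray/covering-type trick. Since the question is local on $X$, I may assume $X$ is affine; then $R^qf_*\mathcal O_Y(L)$ is the sheaf associated to $H^q(Y, \mathcal O_Y(L))$, so it suffices to show $H^q(Y, \mathcal O_Y(L))=0$ for $q>0$ after possibly shrinking $X$. The key point is that the hypothesis ``$f$ is an isomorphism at the generic point of every stratum of $(Y,\Delta)$'' means no stratum of $(Y,\Delta)$ is contracted, so the exceptional locus of $f$ meets each stratum in a proper closed subset; this is exactly the configuration in which one expects the cohomology of $\mathcal O_Y(L)$ to vanish relatively.

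First I would set up the numerical data. Write $L\sim_{\mathbb R,f}K_Y+\Delta$, so there is an $\mathbb R$-Cartier $\mathbb R$-divisor $B_0$ on $X$ with $L\sim_{\mathbb R}K_Y+\Delta+f^*B_0$. After shrinking the affine $X$, we may assume $f^*B_0\sim_{\mathbb R}0$, hence $L\sim_{\mathbb R}K_Y+\Delta$ on $Y$; applying Lemma \ref{7} componentwise to the pair $(Y,\Delta)$ (using that $\lfloor\Delta\rfloor$ causes no trouble because we can split off the reduced part $S=\lfloor\Delta\rfloor$ and apply Lemma \ref{7} to $\{\Delta\}$), we replace $\Delta$ by a $\mathbb Q$-divisor $S+B$ with $S$ reduced, $\lfloor B\rfloor=0$, and $L\sim_{\mathbb Q}K_Y+S+B$. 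Next, using the usual covering trick (take a cyclic cover of $Y$ adapted to the denominators of $B$, or rather to an auxiliary very ample divisor, as in the reduction step of Hodge theoretic vanishing), one reduces to the case where the relevant linear equivalence is honest linear equivalence and $Y$ is replaced by a simple normal crossing variety mapping to it; the no-contracted-stratum hypothesis is preserved under this construction.

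Second, the geometric heart: choose an auxiliary effective Cartier divisor $D$ on $Y$ such that $D$ is supported on the exceptional locus $\mathrm{Exc}(f)$, $\Supp D\subset\Supp B$ (after enlarging $B$ slightly, which is harmless since we only need $\lfloor B\rfloor=0$ and $\Supp D\subset\Supp B$ — here one uses that $\mathrm{Exc}(f)$ contains no stratum of $(Y,\Delta)$, so a suitable multiple of an ideal cutting out $\mathrm{Exc}(f)$ gives a permissible divisor that can be absorbed into $B$), and such that $L+D$ has no higher relative cohomology for cheap reasons — e.g., because $f_*\mathcal O_Y(L+D)\to f_*\mathcal O_{Y'}(L+D)$ and $D$ can be taken $f$-ample on the exceptional locus so that some relative vanishing on the fibers kicks in, or more simply by induction on $\dim Y - \dim X$ applied to the restriction to $D$. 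Then Theorem \ref{2}, applied on the proper variety $Y$ (localize: take $X$ proper first by Chow/compactification of the affine base and of $Y$ compatibly — this is the one delicate point, but it is standard and handled exactly as in \cite[Section 7]{fuji-fuji}), gives that $H^q(Y,\mathcal O_Y(L))\to H^q(Y,\mathcal O_Y(L+D))$ is injective for all $q$; combining this with the vanishing of the target in positive degrees (after the shrinking that kills $f^*B_0$ and makes $L+D$ $\pi$-acyclic-ish) yields $H^q(Y,\mathcal O_Y(L))=0$ for $q>0$, hence $R^qf_*\mathcal O_Y(L)=0$.

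\textbf{Main obstacle.} The step I expect to fight with is making ``$\Supp D\subset\Supp B$ with $D$ supported on $\mathrm{Exc}(f)$ and $L+D$ relatively acyclic'' simultaneously achievable while staying inside the simple normal crossing category and keeping $\lfloor B\rfloor=0$; concretely, one must choose $D$ large enough to see all of $\mathrm{Exc}(f)$ (so that the map on $R^qf_*$ factors appropriately and the target vanishes) yet arrange, via Lemma \ref{7} and a perturbation of coefficients, that adding a bit of $D$ to $\Delta$ keeps the pair's strata unchanged and the equivalence $L\sim_{\mathbb Q}K_Y+S+B$ intact. The other genuinely nontrivial point is the reduction to proper $Y$ so that Theorem \ref{2} applies — one compactifies the affine base $X$ and then resolves/compactifies $Y$ over it within simple normal crossing pairs (available by \cite{bierstone-p}), checking that the no-contracted-stratum property and the numerical hypotheses survive; everything else is bookkeeping with the covering trick and Lemma \ref{7}.
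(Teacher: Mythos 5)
Your proposal does not follow the paper's argument, and as written it has genuine gaps. The paper's proof of Lemma \ref{rf} is deliberately elementary and does not touch Theorem \ref{2} at all: after shrinking $X$ and applying Lemma \ref{7} to $\{\Delta\}$ to get $L\sim_{\mathbb Q}K_Y+\Delta$, it argues by induction on the number of irreducible components of $Y$. For irreducible $Y$ the divisor $L-(K_Y+\Delta)\sim_{\mathbb Q}0$ is nef and log big over $X$ with respect to $(Y,\Delta)$ precisely because $f$ is an isomorphism at the generic point of every stratum (each stratum maps generically finitely onto its image, so the trivial divisor is big over the image), and the Reid--Fukuda type vanishing theorem (e.g.\ \cite[Lemma 4.10]{book}) gives $R^qf_*\mathcal O_Y(L)=0$. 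For reducible $Y$ one splits off a component $Y_1$, uses $0\to\mathcal O_{Y_1}(L|_{Y_1}-Y_2|_{Y_1})\to\mathcal O_Y(L)\to\mathcal O_{Y_2}(L|_{Y_2})\to 0$, checks by adjunction that both end terms again satisfy the hypotheses, and concludes by the long exact sequence. Note also that within the paper's architecture Lemma \ref{rf} must be proved \emph{before} Theorem \ref{5.1} and Theorem \ref{8}, since its statement is an input to their proofs; so a proof via injectivity machinery is exactly the route the paper avoids here.

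Concerning your sketch itself, the central step does not work as stated. To apply Theorem \ref{2} you need $L\sim_{\mathbb R}K_Y+S+B$ with $\Supp D\subset\Supp B$ and $\lfloor B\rfloor=0$; but ``enlarging $B$ slightly'' by absorbing a divisor supported on $\Exc(f)$ destroys the $\mathbb R$-linear equivalence for $L$ (you cannot add an effective divisor to the right-hand side without changing $L$ or producing it from some positivity, and here $L-(K_Y+\Delta)$ is relatively trivial, so there is nothing to spend). Moreover, no reason is given why $H^q(Y,\mathcal O_Y(L+D))$ should vanish in positive degrees: a divisor supported on the exceptional locus is essentially never $f$-ample, and ``induction on $\dim Y-\dim X$ applied to the restriction to $D$'' is not an argument. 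Finally, the passage from a global injectivity statement on a compactification $\overline Y\to\overline X$ back to the vanishing of $R^qf_*\mathcal O_Y(L)$ over the original shrunk affine $X$ is exactly the hard content of Theorem \ref{8}: in the paper it requires blowing up strata mapped into bad loci, twisting by very ample divisors on the projective base, and extending $\mathcal O_Y(L)$ to the compactification via a Quot-scheme construction, none of which appears in your outline. (The cyclic covering trick you invoke is also unnecessary and risky here, since such covers need not stay in the simple normal crossing category.) The statement can indeed be deduced a posteriori from Theorem \ref{8} (i) together with the no-contracted-strata hypothesis, but that is circular at this point of the paper; the intended proof is the short induction on components described above.
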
 

\begin{proof}
By shrinking $X$, we may assume that $L\sim _{\mathbb R}K_Y+\Delta$.  
By applying Lemma \ref{7} to $\{\Delta\}$, we may further assume that 
$\Delta$ is a $\mathbb Q$-divisor and 
$L\sim _{\mathbb Q}K_Y+\Delta$. 
\setcounter{step}{0}
\begin{step}\label{1ne}
We assume that $Y$ is irreducible. 
In this case, $L-(K_Y+\Delta)$ is 
nef and log big over $X$ with respect to the pair $(Y, \Delta)$, that is, 
$L-(K_Y+\Delta)$ is nef and big over $X$ and $(L-(K_Y+\Delta))|_W$ is 
big over $f(W)$ for every stratum $W$ of the pair $(Y, \Delta)$. 
Therefore, $R^qf_*\mathcal O_Y(L)=0$ for every $q>0$ 
by the vanishing theorem of Reid--Fukuda type (see, for example, \cite[Lemma 4.10]{book}). 
\end{step}
\begin{step}
Let $Y_1$ be an irreducible component of $Y$ and let $Y_2$ be the union 
of the other irreducible components of $Y$. 
Then we have a short exact sequence 
$$0\to \mathcal O_{Y_1}(-Y_2|_{Y_1})\to 
\mathcal O_Y\to \mathcal O_{Y_2}\to 0.$$ 
We set $L'=L|_{Y_1}-{Y_2}|_{Y_1}$. 
Then we have a short exact sequence 
$$0\to \mathcal O_{Y_1}(L')\to 
\mathcal O_Y(L)\to \mathcal O_{Y_2}(L|_{Y_2})\to 0$$ 
and $L'\sim _{\mathbb Q}K_{Y_1}+\Delta|_{Y_1}$. 
On the other hand, we can check 
that $$L|_{Y_2}\sim 
_{\mathbb Q}K_{Y_2}+{Y_1}|_{Y_2}+\Delta|_{Y_2}.$$ 
We have already known that $R^qf_*\mathcal O_{Y_1}(L')=0$ 
for every $q>0$ by Step \ref{1ne}. 
By the induction on the number of 
the irreducible components of 
$Y$, we have $R^qf_*\mathcal O_{Y_2}(L|_{Y_2})=0$ 
for every $q>0$. 
Therefore, $R^qf_*\mathcal O_Y(L)=0$ for every 
$q>0$ by the exact sequence: 
$$\cdots 
\to R^qf_*\mathcal O_{Y_1}(L')\to 
R^qf_*\mathcal O_Y(L)\to R^qf_*\mathcal O_{Y_2}
(L|_{Y_2})\to \cdots. $$
\end{step} 
So, we finish the proof of Lemma \ref{rf}. 
\end{proof}

Although Lemma \ref{rf} is a very easy generalization of the relative Kawamata--Viehweg 
vanishing theorem, it is sufficiently powerful for the study of reducible 
varieties once we combine it with the recent results 
in \cite{bierstone-milman} and \cite{bierstone-p}. 
In Section \ref{sec-sdlt}, 
we will see an application of Lemma \ref{rf} for the study of semi divisorial log 
terminal pairs.

It is the time to state the main injectivity theorem 
for simple normal crossing pairs. 
It is a direct application of Theorem \ref{2}. 
Our formulation of Theorem \ref{5.1} 
is indispensable for the proof of our main theorem:~Theorem \ref{8}. 

\begin{thm}[Injectivity theorem for simple normal crossing pairs]\label{5.1} 
Let $(X, \Delta)$ be a simple normal crossing 
pair such that $X$ is proper and that $\Delta$ is a boundary 
$\mathbb R$-divisor on $X$. 
Let $L$ be a Cartier 
divisor on $X$ and let $D$ be an effective Cartier divisor that is permissible 
with 
respect to $(X, \Delta)$. 
Assume the following conditions. 
\begin{itemize}
\item[(i)] $L\sim _{\mathbb R}K_X+\Delta+H$, 
\item[(ii)] $H$ is a semi-ample $\mathbb R$-Cartier 
$\mathbb R$-divisor, and 
\item[(iii)] $tH\sim _{\mathbb R} D+D'$ for some 
positive real number $t$, where 
$D'$ is an effective $\mathbb R$-Cartier 
$\mathbb R$-divisor that is permissible with respect to $(X, \Delta)$. 
\end{itemize}
Then the homomorphisms 
$$
H^q(X, \mathcal O_X(L))\to H^q(X, \mathcal O_X(L+D)), 
$$ 
which are induced by the natural inclusion 
$\mathcal O_X\to \mathcal O_X(D)$, 
are injective for all $q$. 
\end{thm}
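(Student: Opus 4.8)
The plan is to deduce the theorem from the Hodge theoretic injectivity theorem (Theorem \ref{2}) by absorbing the semi-ample divisor $H$ into the boundary. The only genuine difficulty is that $\Supp(\Delta+D+D')$ need not be a simple normal crossing divisor, so before anything else I would pass to a good simple normal crossing model.

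To that end, I would invoke the resolution theory for reducible varieties (cf.~\cite{bierstone-milman}, \cite{bierstone-p}, and the constructions in \cite{book}) to choose a projective birational morphism $g\colon Z\to X$ from a simple normal crossing variety which is an isomorphism over the generic point of every stratum of $(X,\Delta)$ and for which $\Exc(g)$ together with the strict transform of $\Delta+D+D'$ is a simple normal crossing divisor on $Z$. On $Z$ I would put a boundary $\Delta_Z$ by a round-up construction as in \cite{book} --- keeping the strict transform of $\Delta$ and assigning to every $g$-exceptional prime divisor a coefficient in $[0,1)$ --- and a Cartier divisor $L_Z$ differing from $g^*L$ by a suitable $g$-exceptional divisor, so that $g_*\mathcal O_Z(L_Z)=\mathcal O_X(L)$ and $L_Z\sim_{\mathbb R,g}K_Z+\Delta_Z$. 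Since $\lfloor\Delta_Z\rfloor$ contains no $g$-exceptional divisor, $g$ is an isomorphism over the generic point of every stratum of $(Z,\Delta_Z)$, so Lemma \ref{rf} yields $R^qg_*\mathcal O_Z(L_Z)=0$ for $q>0$, and likewise for $L_Z+g^*D$ since $g^*D$ is $g$-trivial. The Leray spectral sequence then identifies the homomorphism in the theorem with $H^q(Z,\mathcal O_Z(L_Z))\to H^q(Z,\mathcal O_Z(L_Z+g^*D))$, so after renaming I may assume that $\Supp(\Delta+D+D')$ is a simple normal crossing divisor on $X$.

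Now I would run the Esnault--Viehweg type perturbation. Fix a small $\varepsilon>0$. From $tH\sim_{\mathbb R}D+D'$ we get
$$L\sim_{\mathbb R}K_X+\Delta+(1-\varepsilon)H+\frac{\varepsilon}{t}(D+D'),$$
and $(1-\varepsilon)H$ is still semi-ample. Writing it as a positive combination of semi-ample Cartier divisors and taking general members of large multiples of these (which, by Bertini, meet $\Supp(\Delta+D+D')$ transversally), I obtain an effective $\mathbb R$-Cartier $\mathbb R$-divisor $\widetilde H\sim_{\mathbb R}(1-\varepsilon)H$ with $\Supp(\Delta+D+D'+\widetilde H)$ simple normal crossing and with arbitrarily small coefficients. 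Setting $S=\lfloor\Delta\rfloor$ and $B=\{\Delta\}+\widetilde H+\frac{\varepsilon}{t}(D+D')$, I note that $D$, $D'$, and a general $\widetilde H$ contain no stratum of $(X,\Delta)$, hence share no component with $S$, and that after shrinking $\varepsilon$ and $\widetilde H$ every coefficient of $B$ is $<1$. Thus $(X,S+B)$ is a simple normal crossing pair, $S+B$ is a boundary $\mathbb R$-divisor with $S$ reduced and $\lfloor B\rfloor=0$, $D$ is an effective Cartier divisor with $\Supp D\subset\Supp B$, and $L\sim_{\mathbb R}K_X+S+B$. Theorem \ref{2} applied to $(X,S+B)$, $L$, and $D$ then gives the injectivity of $H^q(X,\mathcal O_X(L))\to H^q(X,\mathcal O_X(L+D))$ for all $q$, as required.

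The hard part is the first step: building the simple normal crossing model and, above all, choosing $\Delta_Z$ and $L_Z$ so that the hypotheses of Lemma \ref{rf} are met on the nose --- in particular so that no $g$-exceptional divisor is forced to have coefficient $1$, which is precisely where the ``isomorphism over strata'' hypothesis of Lemma \ref{rf} could otherwise fail. Once one is on a model on which $\Delta+D+D'$ has simple normal crossing support, the rest is formal, and all of the Hodge theory is already encapsulated in Theorem \ref{2}.
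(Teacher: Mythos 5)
Your proposal is correct and follows essentially the same route as the paper: resolve via Bierstone--Vera Pacheco so that $\Delta+D+D'$ (and the exceptional locus) has simple normal crossing support with round-up discrepancy bookkeeping, descend with Lemma \ref{rf} and the Leray spectral sequence, absorb $(1-\varepsilon)H$ and $\frac{\varepsilon}{t}(D+D')$ into a fractional boundary using general members of the semi-ample systems, and apply Theorem \ref{2}. The only cosmetic difference is that the paper applies Theorem \ref{2} upstairs on the resolved model (to $L'=f^*L+\lceil E\rceil$ and $f^*D$) instead of first renaming and working downstairs, and it secures the hypotheses of Lemma \ref{rf} by additional blow-ups at exactly the delicate point you flag.
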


\begin{rem}
For the definition and the basic properties of semi-ample $\mathbb R$-Cartier $\mathbb R$-divisors, 
see \cite[Definition 4.11, Lemma 4.13, and Lemma 4.14]{fujino-fund}. 
\end{rem}

\begin{proof}[Proof of {\em{Theorem \ref{5.1}}}] 
We set $S=\lfloor \Delta\rfloor$ and $B=\{\Delta\}$ throughout this proof. 
We obtain a projective birational 
morphism $f:Y\to X$ from 
a simple normal crossing 
variety $Y$ such that $f$ is an isomorphism 
over $X\setminus \Supp (D+D'+B)$, and that the 
union of the support of $f^*(S+B+D+D')$ and the 
exceptional locus of $f$ has a simple normal crossing support on $Y$ 
(cf.~\cite[Theorem 1.5]{bierstone-p}). 
Let $B'$ be the strict transform of $B$ on $Y$. 
We may assume that $\Supp B'$ is disjoint from 
any strata of $Y$ that are 
not irreducible components of $Y$ 
by taking blowing-ups. 
We write $$K_Y+S'+B'=f^*(K_X+S+B)+E, $$  
where $S'$ is the strict transform of $S$ and 
$E$ is $f$-exceptional. 
By the construction of $f:Y\to X$, 
$S'$ is Cartier and $B'$ is $\mathbb R$-Cartier. 
Therefore, $E$ is also $\mathbb R$-Cartier. 
It is easy to see that $E_+=\lceil E\rceil \geq 0$. 
We set $L'=f^*L+E_+$ and $E_-=E_+-E\geq 0$. 
We note that $E_+$ is Cartier and $E_-$ is 
$\mathbb R$-Cartier 
because $\Supp E$ is simple normal crossing on 
$Y$ (cf.~Remark \ref{211ne}). 
Since $f^*H$ is an $\mathbb R_{>0}$-linear 
combination of semi-ample Cartier 
divisors, we can write $f^*H\sim _{\mathbb R}
\sum _i a_i H_i$, where 
$0< a_i <1$ and $H_i$ is a general 
Cartier divisor on $Y$ for every $i$. 
We set 
$$B''=B'+E_-+\frac{\varepsilon}{t} 
f^*(D+D')+(1-\varepsilon) \sum _i a_i H_i$$ for 
some $0<\varepsilon \ll 1$. 
Then $L'\sim _{\mathbb R}K_Y+S'+B''$. 
By the construction, $\lfloor B''\rfloor =0$, the 
support of $S'+B''$ is simple normal crossing 
on $Y$, and $\Supp B''\supset \Supp f^*D$. 
So, Theorem \ref{2} implies that 
the homomorphisms 
$$H^q(Y, \mathcal O_Y(L'))\to H^q(Y, \mathcal O_Y(L'+f^*D))$$ are 
injective for all $q$.
By Lemma \ref{rf}, $R^qf_*\mathcal O_Y(L')=0$ for 
every $q>0$ and it is easy to see that $f_*\mathcal O_Y(L')\simeq 
\mathcal O_X(L)$. By the 
Leray spectral sequence, 
the homomorphisms $$H^q(X, \mathcal O_X(L))\to 
H^q(X, \mathcal O_X(L+D))$$ are injective for all $q$. 
\end{proof}

\begin{lem}\label{comp}
Let $f:Z\to X$ be a proper morphism 
from a simple normal crossing pair $(Z, B)$ to an algebraic variety $X$. 
Let $\overline X$ be a projective variety such that $\overline 
X$ contains $X$ as a Zariski dense open subset. 
Then there exist a proper simple normal crossing pair 
$(\overline Z, \overline B)$ that is a compactification of $(Z, B)$ 
and a proper morphism $\overline f: \overline Z\to 
\overline X$ that compactifies $f:Z\to X$. 
Moreover, $\overline Z\setminus Z$ is a divisor on $\overline Z$, $\Supp \overline B\cup 
\Supp (\overline Z\setminus Z)$ is a simple normal 
crossing divisor on $\overline Z$, and 
$\overline Z\setminus Z$ has no common 
irreducible components with $\overline B$. 
We note that we can make $\overline B$ a $\mathbb K$-Cartier 
$\mathbb K$-divisor on $\overline {Z}$ when so is $B$ on $Z$, 
where $\mathbb K$ is $\mathbb Z$, $\mathbb Q$, or $\mathbb R$. 
When $f$ is projective, we can make $\overline Z$ projective. 
\end{lem}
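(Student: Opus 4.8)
The plan is to build $\overline{Z}$ in three stages: (1) crudely compactify $Z$ over $\overline{X}$; (2) blow up so that the added locus becomes a divisor; (3) resolve the result so that the ambient variety becomes simple normal crossing and the boundary together with $\overline{B}$ becomes a simple normal crossing divisor, all without modifying anything over $Z$.

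For stage (1), note that the composite $Z\to X\hookrightarrow\overline{X}$ is separated of finite type, so Nagata's compactification theorem provides a proper morphism $\overline{Z}_0\to\overline{X}$ and an open immersion $Z\hookrightarrow\overline{Z}_0$ over $\overline{X}$. Replacing $\overline{Z}_0$ by the closure of $Z$ with its reduced structure, we may assume that $\overline{Z}_0$ is reduced, contains $Z$ as a dense open subscheme, and is pure-dimensional (since $Z$, being simple normal crossing, is equi-dimensional). When $f$ is projective we instead fix a closed $X$-immersion $Z\hookrightarrow\mathbb{P}^N_X$ and take $\overline{Z}_0$ to be the closure of $Z$ in $\mathbb{P}^N_{\overline{X}}$ with reduced structure; then $\overline{Z}_0$ is projective and still contains $Z$ as a dense open subscheme, with $\overline{Z}_0\to\overline{X}$ restricting to $f$ on $Z$.

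For stages (2) and (3): blowing up $\overline{Z}_0$ along the reduced closed subset $\overline{Z}_0\setminus Z$ is an isomorphism over $Z$ and lets us assume in addition that $\overline{Z}_0\setminus Z$ is a (Cartier) divisor on $\overline{Z}_0$. Let $\overline{B}_0$ be the closure of $B$ in $\overline{Z}_0$. Now apply the functorial resolution of singularities for reducible varieties (\cite{bierstone-milman}, \cite[Theorem~1.5]{bierstone-p}; the same tool used in the proof of Theorem~\ref{5.1}) to $\overline{Z}_0$ with the divisor $\overline{B}_0+(\overline{Z}_0\setminus Z)$: this yields a projective birational morphism $g\colon\overline{Z}\to\overline{Z}_0$ from a simple normal crossing variety $\overline{Z}$ that is an isomorphism over the locus where the pair $(\overline{Z}_0,\overline{B}_0+(\overline{Z}_0\setminus Z))$ is already simple normal crossing, and for which the union of $\Exc(g)$ with the total transform of $\overline{B}_0+(\overline{Z}_0\setminus Z)$ has simple normal crossing support on $\overline{Z}$. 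Since $(Z,B)$ is a simple normal crossing pair and $\overline{Z}_0\setminus Z$ is disjoint from $Z$, that pair is simple normal crossing over $Z$, so $g$ is an isomorphism over $Z$; in particular $g$ has no exceptional divisor over $Z$, so $\Exc(g)\subseteq g^{-1}(\overline{Z}_0\setminus Z)$ and $\overline{Z}\setminus Z=g^{-1}(\overline{Z}_0\setminus Z)$ is a divisor. Set $\overline{f}=(\overline{Z}_0\to\overline{X})\circ g$, which is proper, and projective when $f$ is (by stage (1)); and let $\overline{B}$ be the strict transform of $B$ on $\overline{Z}$. Then $\overline{f}|_Z=f$, $\overline{B}|_Z=B$, $\Supp\overline{B}\cup\Supp(\overline{Z}\setminus Z)$ is a simple normal crossing divisor on $\overline{Z}$, and $\overline{B}$, being the closure of a divisor contained in $Z$, shares no irreducible component with $\overline{Z}\setminus Z$.

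It remains to address the $\mathbb{K}$-Cartier claim and to flag the main obstacle. Each irreducible component of a simple normal crossing divisor on a simple normal crossing variety is, in an embedding $x_1\cdots x_p=0$, locally cut out by a single coordinate $y_i$, hence is a permissible Cartier divisor; consequently any $\mathbb{K}$-divisor whose support lies in a simple normal crossing divisor is $\mathbb{K}$-Cartier (cf.~Remark~\ref{211ne}). Since $\overline{B}$ carries the same coefficients as $B$ along the strict transforms of the components of $B$, and its support lies in the simple normal crossing divisor $\Supp\overline{B}\cup\Supp(\overline{Z}\setminus Z)$, it follows that $\overline{B}$ is $\mathbb{K}$-Cartier whenever $B$ is. The only genuinely delicate point is stage (3): one needs resolution of singularities available for \emph{reducible} varieties, in a form that is an isomorphism over the locus where the pair is already simple normal crossing --- and this is exactly where it matters that we work with \emph{simple} normal crossing (not merely normal crossing) varieties, as emphasized in the introduction. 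Everything else is routine bookkeeping with strict transforms.
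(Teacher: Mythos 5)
Your stages (1)--(3) are essentially the paper's construction (a crude compactification over $\overline X$, blow-ups inside the boundary, then the Bierstone--Milman/Bierstone--Vera Pacheco resolution which is an isomorphism over the semi-snc locus), with one citation caveat: \cite[Theorem 1.5]{bierstone-p}, the tool used in the proof of Theorem \ref{5.1}, takes a simple normal crossing variety as input, whereas your $\overline Z_0$ can be arbitrarily singular along $\overline Z_0\setminus Z$. What you actually need is ``resolution except for (semi-)snc singularities'' for an arbitrary reduced compactification, i.e.\ \cite[Theorem 1.5]{bierstone-milman} combined with \cite[Theorem 1.2]{bierstone-p}, which is exactly what the paper invokes. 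That is a fixable bookkeeping point.

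The genuine gap is your last paragraph, the $\mathbb K$-Cartier claim. The assertion that each irreducible component of a simple normal crossing divisor on a simple normal crossing variety is Cartier, and hence that any $\mathbb K$-divisor supported in an snc divisor is $\mathbb K$-Cartier, is false: in the local model $x_1\cdots x_p=0$ the divisor $(y_i=0)|_U$ is Cartier but is \emph{reducible} (one component on each branch), and a single component need not be Cartier. The paper's own example $X=\{xy=0\}\subset \mathbb C^3$ makes this explicit: $C=(x=z=0)$ is an irreducible component of the snc divisor $(z=0)|_X$, yet it is not Cartier at the origin; in fact no multiple $mC$ is Cartier there (a local equation would restrict to a unit on the branch $\{y=0\}$ but to $u\cdot z^m$ on the branch $\{x=0\}$, and the two restrictions must agree on the double curve), so $C$ is not even $\mathbb Q$-Cartier. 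Remark \ref{211ne} does not rescue the argument, since there the $\mathbb K$-Cartier property of $D$ is a \emph{hypothesis}, not a conclusion drawn from snc support. In your construction the danger occurs precisely at points of $\overline Z\setminus Z$ where the closure of a component of $B$ meets the double locus of $\overline Z$ from inside a single branch; nothing you have arranged excludes this, and at such a point $\overline B$ can fail to be $\mathbb K$-Cartier even though $\Supp \overline B\cup \Supp(\overline Z\setminus Z)$ is an snc divisor and the two share no component. This is why the paper deduces the $\mathbb K$-Cartier statement from the argument in \cite[Section 8]{bierstone-p}: the compactification/resolution has to be chosen, by further admissible blow-ups, so that the extension of $B$ remains $\mathbb K$-Cartier; it is not a formal consequence of the snc support condition, and your proof as written does not establish it.
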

\begin{proof}
Let $\overline B\subset \overline Z$ be any compactification of $B\subset Z$. 
By blowing up $\overline Z$ inside $\overline Z\setminus Z$, 
we may assume that $f:Z\to X$ extends to $\overline f: 
\overline Z\to \overline X$, $\overline Z$ is a simple normal crossing 
variety, and $\overline Z\setminus Z$ is of pure codimension one (see 
\cite[Theorem 1.5]{bierstone-milman}). 
By \cite[Theorem 1.2]{bierstone-p}, we can construct a 
desired compactification. Note that 
we can make $\overline B$ a $\mathbb K$-Cartier $\mathbb K$-divisor 
by the argument in \cite[Section 8]{bierstone-p}. 
\end{proof}

Theorem \ref{8} below is our main theorem of this paper, which is a 
generalization of Koll\'ar's torsion-free and vanishing theorem (see 
\cite[Theorem 2.1]{kollar1}). 
The reader find various applications of Theorem \ref{8} 
in \cite{book}, \cite{fuji-fuji}, and \cite{fujino-slc}. We note that 
Theorem \ref{8} for {\em{embedded normal crossing pairs}} 
was first formulated by Florin Ambro for his theory of {\em{quasi-log varieties}} (cf.~\cite{ambro}). 
For the details of the theory of quasi-log varieties, see \cite[Chapter 3]{book} and \cite{fujino-qlog}. 

\begin{thm}\label{8}
Let $(Y, \Delta)$ be a simple normal crossing 
pair such that $\Delta$ is a boundary $\mathbb R$-divisor on $Y$. 
Let $f:Y\to X$ be a proper morphism to an algebraic variety $X$ and let $L$ be a Cartier 
divisor on $Y$ such that 
$L-(K_Y+\Delta)$ is $f$-semi-ample. 
\begin{itemize}
\item[(i)] every associated prime of $R^qf_*\mathcal O_Y(L)$ is the 
generic point of the $f$-image of some stratum of $(Y, \Delta)$. 
\item[(ii)] 
let $\pi:X\to V$ be a projective morphism to an algebraic variety $V$ such 
that $$L-(K_Y+\Delta)\sim _{\mathbb R}f^*H$$ for 
some $\pi$-ample $\mathbb R$-Cartier 
$\mathbb R$-divisor $H$ on $X$. 
Then $R^qf_*\mathcal O_Y(L)$ is $\pi_*$-acyclic, that is, 
$$R^p\pi_*R^qf_*\mathcal O_Y(L)=0$$ for every $p>0$ and $q\geq 0$. 
\end{itemize}
\end{thm}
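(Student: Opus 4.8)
The plan is to reduce Theorem \ref{8} to the absolute injectivity theorem, Theorem \ref{5.1}, via a standard but delicate bootstrapping argument following Koll\'ar's original strategy. First I would dispose of (i): this is a local statement on $X$, so I may assume $X$ (hence $Y$) is quasi-projective, and by Lemma \ref{comp} I may compactify everything to reduce to the case where $Y$ is proper and $f$ is projective. Using Lemma \ref{7} applied to $\{\Delta\}$ (after pulling back an ample divisor under $f$ to absorb the $f$-semi-ampleness), I would arrange that $\Delta$ is a $\mathbb{Q}$-divisor and that $L-(K_Y+\Delta)\sim_{\mathbb{R}}0$ relatively, at which point the usual covering trick (cyclic covers along a general member of the $f$-semi-ample linear system) lets me assume $L\sim_{\mathbb R}K_Y+\Delta$ outright, or rather reduces to the situation where $L-(K_Y+\Delta)$ is trivial. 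The heart of (i) is then to show that if a section of $R^qf_*\mathcal O_Y(L)$ is supported on a subvariety $T\subset X$ not containing the image of any stratum, that section vanishes; one arranges an effective Cartier divisor $D$ permissible with respect to $(Y,\Delta)$ whose image contains $T$ and whose support one can feed into the injectivity theorem, and then the injection $H^q(Y,\mathcal O_Y(L))\hookrightarrow H^q(Y,\mathcal O_Y(L+D))$ from Theorem \ref{5.1} globalizes (after a further compactification so the $H^q$'s compute the right thing) to force the torsion section to be zero. The main obstacle here is bookkeeping: matching the hypotheses (i), (ii), (iii) of Theorem \ref{5.1} — in particular producing the auxiliary divisor $D'$ with $tH\sim_{\mathbb R}D+D'$ and permissibility — requires choosing the compactification and the blow-ups carefully so that strata are controlled and $\Supp D$ does not meet strata it shouldn't.

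For part (ii), the strategy is the by-now-classical induction on $\dim X$ combined with (i). After using Lemma \ref{comp} and the covering trick as above, I would choose a general divisor $A\in |mH|$ for $m\gg 0$ (with $A$ permissible with respect to $(Y,\Delta)$ via generality, using the resolution results of \cite{bierstone-p} to put $f^*A+\Delta$ into simple normal crossing position), giving a short exact sequence
$$0\to \mathcal O_Y(L)\to \mathcal O_Y(L+f^*A)\to \mathcal O_{f^{-1}A}(L+f^*A)\to 0.$$
Pushing forward, (i) shows $R^qf_*\mathcal O_Y(L)$ is torsion-free in the appropriate sense, so the connecting maps $R^qf_*\mathcal O_Y(L)\to R^{q+1}f_*\mathcal O_Y(L)$ twisted by $\mathcal O_X(-A)$... — more precisely, I would use the injectivity theorem to show that the map $R^qf_*\mathcal O_Y(L)\to R^qf_*\mathcal O_Y(L+f^*A)$ is injective after applying $R^p\pi_*$, which splits the long exact sequence into short exact sequences
$$0\to R^p\pi_*R^qf_*\mathcal O_Y(L)\to R^p\pi_*R^qf_*\mathcal O_Y(L+f^*A)\to R^p\pi_*R^qf_*\mathcal O_{f^{-1}A}(L+f^*A)\to 0.$$
Since $f^*A-$ contributes ampleness, for $m$ large the middle term's higher $R^p\pi_*$ vanish by Serre vanishing on $\overline X$ together with (i) again, while the right-hand term is handled by the inductive hypothesis applied to the simple normal crossing pair $f^{-1}A\to A\to V$ of smaller dimension (here one needs that $(f^{-1}A, \Delta|_{f^{-1}A})$ is again a simple normal crossing pair, which generality of $A$ guarantees). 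Chasing these sequences downward in $p$ then forces $R^p\pi_*R^qf_*\mathcal O_Y(L)=0$ for $p>0$.

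The step I expect to be the genuine obstacle is establishing the injectivity of $R^p\pi_*R^qf_*\mathcal O_Y(L)\to R^p\pi_*R^qf_*\mathcal O_Y(L+f^*A)$ globally on $V$ — the relative version of Theorem \ref{5.1}. The absolute injectivity theorem gives injectivity of $H^q(Y,\mathcal O_Y(L))\to H^q(Y,\mathcal O_Y(L+D))$ only when $Y$ is proper, so to promote this to a statement about sheaves on $V$ one must argue at generic points of $V$ (where one can base change to a proper fiber after yet another compactification) and then combine with the torsion-freeness from (i) to propagate injectivity over all of $V$. Carrying this out requires simultaneously compactifying $Y\to X\to V$ compatibly via Lemma \ref{comp}, keeping track of the Cartier/$\mathbb{R}$-Cartier distinctions for the boundary (Remark \ref{211ne}), and ensuring the auxiliary divisors stay permissible — so essentially all the technical machinery assembled in Sections \ref{sec2} and \ref{sec3} is deployed here at once.
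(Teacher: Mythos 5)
Your overall strategy (reduce everything to the absolute injectivity theorem, Theorem \ref{5.1}, and induct in Koll\'ar's style) is the right one, but two of your steps have genuine gaps. For (i): shrinking $X$ to be affine does \emph{not} make $Y$ quasi-projective, since $f$ is only proper, and the remark following Example \ref{rem-fujita} shows one cannot in general modify $Y$ to become projective while preserving its strata, so there is no quick reduction to the quasi-projective case. More seriously, after compactifying with Lemma \ref{comp} you must still extend the Cartier divisor $L$ to a Cartier divisor $\overline L$ on $\overline Y$ and restore a relation of the form $\overline L-(K_{\overline Y}+\overline S+\overline B)\sim_{\mathbb R}(\text{semi-ample})$ on the compactification; Lemma \ref{comp} provides neither. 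This is precisely the point that lets Theorem \ref{8} hold with no assumption on $X$, and the paper devotes a whole step to it: extend $\mathcal O_Y(L)$ to a coherent sheaf $\mathcal F$ on $\overline Y$, use Grothendieck's Quot scheme (Theorem \ref{quot-s}) to obtain a compactification of $Y$ carrying a line bundle restricting to $\mathcal O_Y(L)$, blow up outside $Y$, and then correct the failure of the $\mathbb R$-linear equivalence by a divisor $E$ supported on $\overline Y\setminus Y$, replacing $\overline L$ by $\overline L+\lceil E\rceil$ and $\overline B$ by $\overline B+\{-E\}$ (Cartier-ness of $\lceil E\rceil$ via Remark \ref{211ne}). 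Without this ingredient your reduction to the proper case does not get off the ground. Also, in the projective case, when the closure $\overline V$ of the support of a torsion section contains images of strata, one must first blow up those strata, replace $L$ by $\sigma^*L-E$, and discard the components mapping into $\overline V\setminus V$; only then can a very ample $A$ through $\overline V$ with $f^*A$ permissible exist. This is a concrete mechanism, not mere bookkeeping.

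For (ii), the step you yourself flag as the obstacle is indeed where the plan breaks: the injectivity of $R^p\pi_*R^qf_*\mathcal O_Y(L)\to R^p\pi_*R^qf_*\mathcal O_Y(L+f^*A)$ does not follow from Theorem \ref{5.1}, and your proposed repair (check it at generic points of $V$ and propagate by torsion-freeness from (i)) cannot work: for $p>0$ the sheaves $R^p\pi_*R^qf_*\mathcal O_Y(L)$ are exactly the objects whose vanishing is to be proved and have no torsion-freeness to exploit, and (i) constrains the associated primes of $R^qf_*\mathcal O_Y(L)$ on $X$, not of its higher direct images on $V$. The paper needs no relative injectivity statement: it reduces to $\dim V=0$ by twisting with $\pi^*G$ for $G$ sufficiently ample on $V$, so that Serre vanishing and global generation identify $R^p\pi_*R^qf_*\mathcal O_Y(L)$ (up to a globally generated twist) with $H^p(X,R^qf_*\mathcal O_Y(L+f^*\pi^*G))$, and in the absolute case it runs a comparison of Leray spectral sequences: the short exact sequences coming from (i), together with induction on $\dim X$ and the $\Gamma$-acyclicity of $R^qf_*\mathcal O_Y(L+A')$, give $E^{pq}_2=0$ for $p\geq 2$, so $E^{1q}_2$ injects into $H^{1+q}(Y,\mathcal O_Y(L))$, and the map $H^{1+q}(Y,\mathcal O_Y(L))\to H^{1+q}(Y,\mathcal O_Y(L+A'))$ is injective by Theorem \ref{5.1} while factoring through $\overline E^{1q}_2=0$; hence $E^{1q}_2=0$. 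You should replace your relative-injectivity step by this reduction to the absolute case, and handle non-projective $V$ by the same compactification-plus-Quot-scheme device as in (i), not by a fiberwise base change.
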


\begin{proof}
We set $S=\lfloor \Delta\rfloor$, $B=\{\Delta\}$, 
and $H'\sim _{\mathbb R}L-(K_Y+\Delta)$ throughout this proof. 
Let us start with the proof of (i).  

\setcounter{step}{0}
\begin{step}
First, we assume that $X$ is projective. 
We may assume that $H'$ is semi-ample by replacing 
$L$ (resp.~$H'$) with 
$L+f^*A'$ (resp.~$H'+f^*A'$), where 
$A'$ is a very ample Cartier divisor on $X$. 
Suppose that 
$R^qf_*\mathcal O_Y(L)$ has a local section whose support 
does not contain the $f$-images of any strata of 
$(Y, S+B)$. 
More precisely, let $U$ be a non-empty Zariski open set 
and let $s\in \Gamma (U, R^qf_*\mathcal O_Y(L))$ be a non-zero section 
of $R^qf_*\mathcal O_Y(L)$ on $U$ whose support 
$V\subset U$ does not contain the $f$-images of any strata of $(Y, S+B)$. Let $\overline V$ be the closure 
of $V$ in $X$. 
We note that $\overline V\setminus V$ may contain the $f$-image of some stratum of $(Y, S+B)$. 
Let $Y_2$ be the union of the irreducible 
components of $Y$ that are mapped into $\overline V\setminus V$ and 
let $Y_1$ be the union of the other irreducible components of $Y$. 
We set 
$$
K_{Y_1}+S_1+B_1=(K_Y+S+B)|_{Y_1}
$$ 
such that $S_1$ is reduced and that $\lfloor B_1\rfloor =0$. 
By replacing $Y$, $S$, $B$, $L$, and $H'$ with 
$Y_1$, $S_1$, $B_1$, $L|_{Y_1}$, and $H'|_{Y_1}$, 
we may assume that no irreducible components of $Y$ are mapped into 
$\overline V\setminus V$. 
Let $C$ be a stratum of $(Y, S+B)$ that 
is mapped into $\overline V\setminus V$. 
Let $\sigma:Y'\to Y$ be the blowing-up along $C$. 
We set $E=\sigma^{-1}(C)$. 
We can write 
$$K_{Y'}+S'+B'=\sigma^*(K_Y+S+B)$$ such that 
$S'$ is reduced and $\lfloor B'\rfloor =0$. 
Thus, 
$$
\sigma ^*H'\sim _{\mathbb R}\sigma^*L-(K_{Y'}+S'+B')
$$ 
and 
$$
\sigma ^*H'\sim _{\mathbb R}\sigma ^*L-E-(K_{Y'}+(S'-E)+B'). 
$$
We note that $S'-E$ is effective. 
We replace $Y$, $H'$, $L$, $S$, and $B$ with 
$Y'$, $\sigma ^*H'$, $\sigma^*L-E$, $S'-
E$, and $B'$. 
By repeating this process finitely many times, we may 
assume that $\overline V$ does not contain the $f$-images of any strata of 
$(Y, S+B)$. 
Then 
we can find a very ample Cartier divisor 
$A$ on $X$ with the following properties. 
\begin{itemize}
\item[(a)] $f^*A$ is permissible with respect to 
$(Y, S+B)$, and 
\item[(b)] $R^qf_*\mathcal O_Y(L)\to R^qf_*\mathcal O_Y(L)\otimes 
\mathcal O_X(A)$ is not injective. 
\end{itemize}
We may assume that $H'-f^*A$ is semi-ample by replacing 
$L$ (resp.~$H'$) with 
$L+f^*A$ (resp.~$H'+f^*A$). 
If necessary, we replace $L$ (resp.~$H'$) with 
$L+f^*A''$ (resp.~$H'+f^*A''$), where $A''$ is a very ample 
Cartier divisor. 
Then, we have $$H^0(X, R^qf_*\mathcal O_Y(L))\simeq 
H^q(Y, \mathcal O_Y(L))$$ and 
$$H^0(X, R^qf_*\mathcal O_Y(L)\otimes \mathcal O_X(A))\simeq 
H^q(Y, \mathcal O_Y(L+f^*A)). $$ 
We obtain that 
$$H^0(X, R^qf_*\mathcal O_Y(L))
\to H^0(X, R^qf_*\mathcal O_Y(L)\otimes \mathcal O_X(A))
$$ 
is not injective by (b) if $A''$ is sufficiently ample. 
So, 
$$H^q(Y, \mathcal O_Y(L))
\to 
H^q(Y, \mathcal O_Y(L+f^*A))$$ is not injective. 
It contradicts Theorem \ref{5.1}. 
Therefore, the support of 
every non-zero local section of $R^qf_*\mathcal O_Y(L)$ contains the $f$-image of 
some stratum of $(Y, \Delta)$, equivalently, the support of every non-zero local section 
of $R^qf_*\mathcal O_Y(L)$ is equal to the union of the $f$-images of 
some strata of $(Y, \Delta)$. This means that 
every associated prime of $R^qf_*\mathcal O_Y(L)$ is the generic 
point of the $f$-image of some stratum of $(Y, \Delta)$. 
We finish the proof when $X$ is projective. 
\end{step}
\begin{step}\label{8-2}
Next, we assume that $X$ is not projective. 
Note that the problem is local. So, we shrink $X$ and may assume that 
$X$ is affine. 
We can write $H'\sim _{\mathbb R}H'_1+H'_2$, 
where $H'_1$ (resp.~$H'_2$) is a semi-ample $\mathbb Q$-Cartier $\mathbb Q$-divisor 
(resp.~a semi-ample $\mathbb R$-Cartier $\mathbb R$-divisor) on $Y$. 
We can write $H'_2\sim _{\mathbb R}\sum _ia_iA_i$, 
where $0<a_i<1$ and $A_i$ is a general effective Cartier divisor 
on $Y$ for every $i$. 
Replacing $B$ (resp.~$H'$) with 
$B+\sum _i a_iA_i$ (resp.~$H'_1)$, 
we may assume that $H'$ is a semi-ample $\mathbb Q$-Cartier 
$\mathbb Q$-divisor. 
Without loss of generality, we may further assume that 
$(Y, B+S+H')$ is a simple normal crossing pair.  
We compactify $X$ and apply Lemma \ref{comp}. 
Then we obtain a compactification $\overline f: \overline Y\to \overline X$ of 
$f:Y\to X$. 
Let $\overline {H'}$ be the closure of $H'$ on $\overline Y$. If $\overline {H'}$ 
is not a semi-ample $\mathbb Q$-Cartier 
$\mathbb Q$-divisor, then we take blowing-ups of $\overline Y$ inside $\overline Y\setminus Y$ 
and obtain a semi-ample $\mathbb Q$-Cartier $\mathbb Q$-divisor 
$\widetilde {H'}$ on 
$\overline Y$ such that $\widetilde {H'}|_{Y}=H'$. 
Let $\overline B$ (resp.~$\overline S$) be the 
closure of $B$ (resp.~$S$) on $\overline Y$. 
We may assume that $\overline S$ is Cartier and $\overline B$ is $\mathbb R$-Cartier 
(cf.~Lemma \ref{comp}). 
We construct a coherent sheaf 
$\mathcal F$ on $\overline {Y}$ 
which is an extension of $\mathcal O_Y(L)$. 
We consider Grothendieck's Quot scheme 
$\mathrm{Quot}^{1, \mathcal O_{\overline {Y}}}
_{\mathcal F/\overline{Y}/\overline{Y}}$ (see Theorem \ref{quot-s}). 
Note that the restriction of $\mathrm{Quot}^{1, \mathcal O_{\overline {Y}}}
_{\mathcal F/\overline{Y}/\overline{Y}}$ to $Y$ is nothing but 
$Y$ because $\mathcal F|_Y=\mathcal O_Y(L)$ is a line bundle on $Y$. 
Therefore, the structure morphism from 
$\mathrm{Quot}^{1, \mathcal O_{\overline {Y}}}
_{\mathcal F/\overline{Y}/\overline{Y}}$ to 
$\overline {Y}$ has a section $s$ over $Y$. 
By taking the closure of $s(Y)$ in 
$\mathrm{Quot}^{1, \mathcal O_{\overline {Y}}}
_{\mathcal F/\overline{Y}/\overline{Y}}$, 
we have a compactification $Y^{\dag}$ of 
$Y$ and a line bundle $\mathcal L$ on $Y^{\dag}$ with 
$\mathcal L|_Y=\mathcal O_Y(L)$. 
If necessary, we take more blowing-ups of $Y^{\dag}$ outside $Y$ (cf.~\cite[Theorem 1.2]{bierstone-p}). 
Then we obtain a new compactification $\overline {Y}$ and a 
Cartier divisor $\overline {L}$ on $\overline {Y}$ with $\overline {L}|_Y=L$ 
(cf.~Lemma \ref{comp}). 
In this situation, $\widetilde {H'}\sim _{\mathbb R}\overline L-(K_{\overline Y}
+\overline S +\overline B)$ does not necessarily hold. 
We can write $$H'+\sum _i b_i (f_i)=L-(K_Y+S+B), $$ 
where $b_i$ is a real number and $f_i\in \Gamma(Y, \mathcal K_Y^*)$ for 
every $i$. 
We set $$E=\widetilde {H'}+\sum _i b_i (f_i)-
(\overline L-(K_{\overline Y}+\overline S+
\overline B)).$$   
We note that we can see $f_i\in \Gamma (\overline Y, \mathcal K_{\overline Y}^*)$ for every $i$ 
(cf.~\cite[Section 7.1 Proposition 1.15]{liu}). 
We replace $\overline L$ (resp.~$\overline B)$ with 
$\overline L+\lceil E\rceil$ (resp.~$\overline B+\{-E\}$). 
Then we obtain the desired property of 
$R^q\overline f_*\mathcal O_{\overline Y}
(\overline L)$ since $\overline X$ is projective. We 
note that $\lceil E\rceil$ is Cartier because 
$\Supp E$ is in $\overline Y\setminus Y$ 
and $E$ is $\mathbb R$-Cartier (cf.~Remark \ref{211ne}). 
So, we finish the whole proof of (i). 
\end{step}

From now on, we prove (ii). 

\setcounter{step}{0}
\begin{step} 
We assume that $\dim V=0$. 
In this case, we can write $H\sim _{\mathbb R}H_1+H_2$, 
where $H_1$ (resp.~$H_2$) is an ample $\mathbb Q$-Cartier 
$\mathbb Q$-divisor 
(resp.~an ample $\mathbb R$-Cartier $\mathbb R$-divisor) on $X$. 
So, we can write $H_2\sim _{\mathbb R}\sum _i a_i A_i$, 
where $0<a_i<1$ and $A_i$ is a general very ample Cartier 
divisor on $X$ for every $i$. 
Replacing $B$ (resp.~$H$) with $B+\sum _i a_i f^*A_i$ 
(resp.~$H_1$), we may assume that 
$H$ is an ample $\mathbb Q$-Cartier $\mathbb Q$-divisor. 
We take a general member $A\in |mH|$, 
where $m$ is a sufficiently large and divisible integer, such 
that $A'=f^*A$ and 
$R^qf_*\mathcal O_Y(L+A')$ is $\pi_*$-acyclic, that is, 
$\Gamma$-acyclic, for all $q$. 
By (i), we have the following short exact sequences, 
$$
0\to R^qf_*\mathcal O_Y(L)\to 
R^qf_*\mathcal O_Y(L+A')
\to R^qf_*\mathcal O_{A'}(L+A')\to 0. 
$$ 
for every $q$. 
Note that $R^qf_*\mathcal O_{A'}(L+A')$ is $\pi_*$-acyclic by 
induction on $\dim X$ and $R^qf_*\mathcal O_Y(L+A')$ is also 
$\pi_*$-acyclic by the above assumption. 
Thus, $E^{pq}_2=0$ for $p\geq 2$ in the 
following commutative diagram of spectral sequences. 
$$
\xymatrix{
&E^{pq}_2=H^p(X, R^qf_*\mathcal O_Y(L)) \ar[d]_{\varphi^{pq}}
\ar@{=>}[r]&H^{p+q}(Y, 
\mathcal O_Y(L))\ar[d]_{\varphi^{p+q}}\\ 
&\overline{E}^{pq}_2=H^p(X, R^qf_*\mathcal O_Y(L+A')) 
\ar@{=>}[r]& H^{p+q}(Y, 
\mathcal O_Y(L+A'))
}
$$
We note that $\varphi^{1+q}$ is injective by Theorem \ref{5.1}. 
We have that 
$$E^{1q}_2\overset {\alpha}{\longrightarrow} H^{1+q}(Y, \mathcal O_Y(L))$$ 
is injective by the fact that 
$E^{pq}_2=0$ for $p\geq 2$. 
We also have that $\overline 
{E}^{1q}_2=0$ by the above assumption. 
Therefore, we obtain 
$E^{1q}_2=0$ since the injection 
$$E^{1q}_2\overset{\alpha}{\longrightarrow} H^{1+q}(Y, \mathcal O_Y(L))
\overset{\varphi^{1+q}}{\longrightarrow} 
H^{1+q}(Y, \mathcal O_Y(L+A'))$$ factors 
through $\overline {E}^{1q}_2=0$. 
This implies that $H^p(X, R^qf_*\mathcal O_Y(L))=0$ for every $p>0$. 
\end{step}

\begin{step}\label{o2}  
We assume that $V$ is projective. 
By replacing $H$ (resp.~$L$) with 
$H+\pi^*G$ (resp.~$L+(\pi\circ f)^*G$), where 
$G$ is a very ample Cartier divisor on $V$, 
we may assume that $H$ is an ample $\mathbb R$-Cartier 
$\mathbb R$-divisor. 
If $G$ is a 
sufficiently ample Cartier 
divisor on $V$, then we have $$H^k(V, R^p\pi_*R^qf_*\mathcal O_Y(L)\otimes G)
=0$$ for 
every $k\geq 1$, 
\begin{align*}
H^0(V, R^p\pi_*R^qf_*\mathcal O_Y(L)\otimes \mathcal O_V(G))
&\simeq H^p(X, R^qf_*\mathcal O_Y(L)\otimes \mathcal O_X(\pi^*G))
\\ &\simeq H^p(X, R^qf_*\mathcal O_Y(L+f^*\pi^*G)) 
\end{align*} 
for every $p$ and $q$, 
and $R^p\pi_*R^qf_*\mathcal O_Y(L)\otimes \mathcal O_V(G)$ is generated by its 
global sections for every $p$ and $q$. 
Since 
\begin{align*}
L+f^*\pi^*G-(K_Y+\Delta)\sim _{\mathbb R}f^*(H+\pi^*G), 
\end{align*} 
and $H+\pi^*G$ is ample, 
we can apply Step 1 and obtain $$H^p(X, R^qf_*\mathcal O_Y(L+
f^*\pi^*G))=0$$ for every $p>0$. 
Thus, $R^p\pi_*R^qf_*\mathcal 
O_Y(L)=0$ for every $p>0$ by the above arguments.
\end{step}
\begin{step}\label{o3}  
When $V$ is not projective, we shrink $V$ and may assume 
that $V$ is affine. 
By the same argument as in Step 1, we may assume that $H$ is $\mathbb Q$-Cartier. 
Let $\overline {\pi}:\overline X\to \overline V$ be a compactification of 
$\pi:X\to V$ such that 
$\overline X$ and $\overline V$ are projective. 
We may assume that 
there exists a $\overline \pi$-ample 
$\mathbb Q$-Cartier $\mathbb Q$-divisor 
$\overline H$ on $\overline X$ such that 
$\overline H|_X=H$. 
By Lemma \ref{comp}, we can compactify 
$f:(Y, S+B)\to X$ and obtain $\overline f: (\overline Y, \overline S+\overline B)\to \overline X$. 
We note that 
${\overline f}^*\overline {H}
\sim _{\mathbb R}\overline L-(K_{\overline Y}+\overline S+\overline B)$ does not necessarily hold, 
where $\overline L$ is a Cartier divisor on $\overline Y$ constructed as in Step \ref{8-2} in the proof of (i). 
By the same argument as in Step \ref{8-2} in the proof of (i), 
we obtain that $R^p\pi_*R^qf_*\mathcal O_Y(L)=0$ for every $p>0$ and $q\geq 0$. 
\end{step}
We finish the whole proof of (ii). 
\end{proof}

\section{Semi divisorial log terminal pairs}\label{sec-sdlt}

Let us start with the definition of {\em{semi divisorial log terminal pairs}} in the 
sense of Koll\'ar. For details of singularities which appear in the minimal model 
program, see \cite{fujino-what} and \cite{kollar-book}. 

\begin{defn}[Semi divisorial log terminal pairs]\label{def-sdlt} 
Let $X$ be a pure-dimensional reduced $S_2$ scheme which is 
simple normal crossing in codimension one. 
Let $\Delta=\sum _i a_i\Delta_i$ be an $\mathbb R$-Weil divisor on $X$ such that 
$0<a_i\leq 1$ for every $i$ and that $\Delta_i$ is not contained in the singular locus of $X$, where 
$\Delta_i$ is a prime divisor on $X$ for every 
$i$ and $\Delta_i\ne \Delta_j$ for $i\ne j$. 
Assume that $K_X+\Delta$ is $\mathbb R$-Cartier. 
The pair $(X, \Delta)$ is {\em{semi divisorial log terminal}} 
({\em{sdlt}}, for short) if $a(E, X, \Delta)>-1$ 
for every exceptional divisor $E$ over $X$ 
such that $(X, \Delta)$ is not a simple normal crossing pair at the generic point 
of $c_X(E)$, where 
$c_X(E)$ is the center of $E$ on $X$. 
\end{defn}

We note that if $(X, \Delta)$ is sdlt and 
$X$ is irreducible then $(X, \Delta)$ is a divisorial log terminal pair (dlt, for short). 
The following theorem is a direct generalization 
of \cite[Theorem 4.14]{book} (cf.~\cite[Proposition 2.4]{fujino-lc}). 
It is an easy application of Lemma \ref{rf}. 

\begin{thm}[{cf.~\cite[Theorem 5.2]{fujino-lc}}]\label{sdlt}
Let $(X, D)$ be a semi divisorial log terminal pair. 
Let $X=\bigcup _{i\in I}X_i$ be the irreducible decomposition. 
We set $$Y=\underset{i\in J}\bigcup X_i\subset X$$ for $J\subset I$. 
Then $Y$ is Cohen--Macaulay, semi-normal, and has only Du Bois singularities. 
In particular, each irreducible component of $X$ is normal and $X$ itself is Cohen--Macaulay. 
\end{thm}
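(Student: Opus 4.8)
The plan is to reduce the statement to a relative vanishing theorem of the form of Lemma~\ref{rf} on a good simple normal crossing resolution, and then to descend the Cohen--Macaulay, Du Bois and semi-normality properties along that resolution.

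First I would set up the resolution. Since $(X,D)$ is sdlt, $X$ is simple normal crossing in codimension one, and because every exceptional divisor over a non--simple-normal-crossing point has discrepancy $>-1$, the pair $(X,D)$ is in fact simple normal crossing at the generic point of every one of its strata; in particular all these generic points lie in the open locus $U\subset X$ on which $(X,D)$ is simple normal crossing, and $X\setminus U$ has codimension $\geq 2$. Using resolution of singularities for reducible varieties (\cite{bierstone-milman},~\cite{bierstone-p}) I would take a projective birational morphism $g\colon W\to X$ from a simple normal crossing variety $W$ which is an isomorphism over $U$, such that $\Exc(g)\cup\Supp g^{-1}_{*}D$ is a simple normal crossing divisor on $W$ and $\Exc(g)$ avoids the generic points of all strata of $W$; then $g$ is an isomorphism at the generic point of every stratum of $(W,g^{-1}_{*}D)$. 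Writing $K_W+g^{-1}_{*}D=g^{*}(K_X+D)+E$ with $E$ being $g$-exceptional, the discrepancy bound gives $E_{+}:=\lceil E\rceil\geq 0$, which is Cartier by Remark~\ref{211ne}, while $E_{-}:=E_{+}-E\geq0$ is $\mathbb R$-Cartier and $g^{-1}_{*}D+E_{-}$ is a boundary $\mathbb R$-divisor. Since $E_{+}\sim_{\mathbb R,g}K_W+(g^{-1}_{*}D+E_{-})$ and $K_W\sim_{\mathbb R,g}K_W$, Lemma~\ref{rf} yields
$$R^{q}g_{*}\mathcal O_W(E_{+})=0,\qquad R^{q}g_{*}\omega_W=0\qquad(q>0).$$
Applying adjunction on $W$, the same works for each sub--simple-normal-crossing variety $W_Y:=\bigcup_{i\in J}W_i\subset W$ (the union of the strict transforms of the $X_i$, $i\in J$), which maps onto $Y$: on $W_Y$ one has $\bigl(E_{+}|_{W_Y}+(\text{double locus of }W_Y)\bigr)\sim_{\mathbb R,g}K_{W_Y}+(\text{boundary})$ and $K_{W_Y}\sim_{\mathbb R,g}K_{W_Y}$, so $R^{q}g_{*}\mathcal O_{W_Y}(\,\cdot\,)=0$ and $R^{q}g_{*}\omega_{W_Y}=0$ for $q>0$.

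Next I would descend. Because $g$ is an isomorphism in codimension one, $X$ is $S_2$ with at worst nodes in codimension one, and the vanishing gives a connectedness statement for the fibres of $g$, the natural inclusion $\mathcal O_Y\hookrightarrow g_{*}\mathcal O_{W_Y}(E_{+}|_{W_Y})$ is an isomorphism, so $Rg_{*}\mathcal O_{W_Y}(E_{+}|_{W_Y})\simeq\mathcal O_Y$. Factoring $\mathcal O_Y\to Rg_{*}\mathcal O_{W_Y}\to Rg_{*}\mathcal O_{W_Y}(E_{+}|_{W_Y})\simeq\mathcal O_Y$ exhibits $\mathcal O_Y\to Rg_{*}\mathcal O_{W_Y}$ as a split monomorphism in the derived category; since a simple normal crossing variety is Du Bois, the standard descent criterion shows $Y$ is Du Bois, hence semi-normal. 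For the Cohen--Macaulay property, $W_Y$ is Gorenstein, so $\omega^{\bullet}_{W_Y}\simeq\omega_{W_Y}[\dim Y]$; combining $Rg_{*}\mathcal O_{W_Y}\simeq\mathcal O_Y$, Grothendieck duality, and $R^{q}g_{*}\omega_{W_Y}=0$ for $q>0$ gives $\omega^{\bullet}_{Y}\simeq(g_{*}\omega_{W_Y})[\dim Y]$, concentrated in a single degree, so $Y$ is Cohen--Macaulay. Taking $Y=X$ gives the Cohen--Macaulayness of $X$; taking $Y=X_i$ gives that $X_i$ is Cohen--Macaulay, and since $X_i$ is regular in codimension one (because $(X,D)$ is simple normal crossing at the generic point of every stratum, in particular along the codimension-one points of $X_i$), $X_i$ is normal.

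The main obstacle is the descent step: converting the cohomological vanishing produced by Lemma~\ref{rf} into the identity $\mathcal O_Y\cong g_{*}\mathcal O_{W_Y}(E_{+}|_{W_Y})$, which underlies both the Du Bois splitting and the duality computation of $\omega^{\bullet}_{Y}$. This requires knowing that $Y$ is $S_2$ and that the fibres of $g$ are connected, and it is precisely here that the demi-normality of $X$ and the careful good properties of the resolution $g$ (above all that $\Exc(g)$ meets no deeper stratum) must be exploited. Everything else is formal once Lemma~\ref{rf} and the reducible resolution theory are in hand; the reduction from a general union $Y$ to the pieces $W_Y\subset W$ is what lets one avoid assuming in advance that $(Y,\mathrm{Diff})$ is sdlt.
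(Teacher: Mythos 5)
Your overall strategy (resolve by \cite{bierstone-p}, write $K_W+g^{-1}_*D=g^*(K_X+D)+E$ with $\lceil E\rceil\geq 0$, apply Lemma~\ref{rf}, split $\mathcal O_Y\to Rg_*\mathcal O_{W_Y}\to \mathcal O_Y$, then get Du Bois, semi-normality, and Cohen--Macaulayness by Grothendieck duality) is the same as the paper's, but the step you yourself single out as ``the main obstacle'' is a genuine gap, and the way you propose to fill it is circular. You need the isomorphism $\mathcal O_Y\simeq g_*\mathcal O_{W_Y}(E_+|_{W_Y})$, and you say this ``requires knowing that $Y$ is $S_2$'' together with a connectedness statement for the fibres. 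But $Y$ is an arbitrary union of components of $X$: its $S_2$-ness (indeed its Cohen--Macaulayness, and for $J=\{i\}$ the normality of $X_i$) is precisely the conclusion of the theorem, so it cannot be fed into the descent step. Only $X$ itself is assumed $S_2$ (and is semi-normal since it is snc in codimension one), which gives $f_*\mathcal O_Z\simeq \mathcal O_X$ and $f_*\mathcal O_Z(\lceil E\rceil)\simeq\mathcal O_X$ on the \emph{full} resolution $f\colon Z\to X$, not on $W_Y\to Y$. The missing idea, which is how the paper gets around this, is to keep the complementary union in the picture: with $W=\bigcup_{i\in J}Z_i$ and $V=\bigcup_{i\in I\setminus J}Z_i$ one twists $0\to\mathcal O_V(-W|_V)\to\mathcal O_Z\to\mathcal O_W\to 0$ by $\lceil E\rceil$ and applies Lemma~\ref{rf} not only to $\mathcal O_Z(\lceil E\rceil)$ and $\mathcal O_W(\lceil E\rceil)$ but also to $\mathcal O_V(\lceil E\rceil-W|_V)$, using $(\lceil E\rceil-W)|_V\sim_{\mathbb R,f}K_V+(D'+\{-E\})|_V$. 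The resulting vanishing of $R^1f_*\mathcal O_V(\lceil E\rceil-W|_V)$ makes $\mathcal O_X\simeq f_*\mathcal O_Z(\lceil E\rceil)\to f_*\mathcal O_W(\lceil E\rceil)$ surjective; since this map factors through the restriction $\mathcal O_X\to\mathcal O_Y$ and $\mathcal O_Y\to f_*\mathcal O_W$ is injective, one gets $\mathcal O_Y\simeq f_*\mathcal O_W\simeq f_*\mathcal O_W(\lceil E\rceil)$ with no a priori hypothesis on $Y$. Your proposal never applies the relative vanishing to $V$ with the twist $-W|_V$, and without that the descent isomorphism is unproved.

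A secondary, repairable inaccuracy: for Cohen--Macaulayness you invoke $Rg_*\mathcal O_{W_Y}\simeq\mathcal O_Y$, but the splitting only exhibits $\mathcal O_Y\to Rg_*\mathcal O_{W_Y}$ as a split monomorphism (the full isomorphism is the content of Proposition~\ref{prop42}, proved in the paper \emph{after} the theorem, via purity of $f_*\omega_W$ and duality). The correct argument at this stage is the paper's: dualize the split quasi-isomorphism $\mathcal O_Y\to Rf_*\mathcal O_W(\lceil E\rceil)\simeq\mathcal O_Y$ to get $\omega^\bullet_Y\to Rf_*\omega^\bullet_W\to\omega^\bullet_Y$ with composition a quasi-isomorphism, so that $h^i(\omega^\bullet_Y)$ injects into $R^{i+d}f_*\omega_W$, which vanishes for $i>-d$ by Lemma~\ref{rf}; this yields $\omega^\bullet_Y\simeq\omega_Y[d]$ without assuming $Rg_*\mathcal O_{W_Y}\simeq\mathcal O_Y$.
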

We note that an irreducible component of a semi-normal scheme need not 
be semi-normal (see \cite[Example 9.8]{kollar-book}). 
We also note that an irreducible component of a Cohen--Macaulay scheme need not be Cohen--Macaulay. 
The author learned the following example from Shunsuke Takagi. 

\begin{ex} We set 
$$
R=\mathbb C[x, y, z, w]/(yz-xw, xz^2-y^2w). 
$$ 
Then $X=\Spec R$ is a reduced reducible two-dimensional 
Cohen--Macaulay scheme. 
An irreducible component 
$$
Y=\Spec R/(y^3-x^2z, z^3-yw^2)
$$ 
of $X$ is not Cohen--Macaulay. 
It is because 
$$
R/(y^3-x^2z, z^3-yw^2)\simeq \mathbb C[s^4, s^3t, st^3, t^4]. 
$$
\end{ex}

The Cohen--Macaulayness of $X$ is very important 
for various duality theorems. We use it in the proof of Theorem \ref{11} in \cite{fuji-fuji}. 

Let us start the proof of Theorem \ref{sdlt}. 

\begin{proof}[Proof of {\em{Theorem \ref{sdlt}}}]
By \cite[Theorem 1.2]{bierstone-p}, there is a morphism $f:Z\to X$ given by a composite of blowing-ups with smooth centers such that $(Z, f^{-1}_*D+\Exc (f))$ is a simple normal crossing 
pair and that $f$ is an isomorphism over $U$, where 
$U$ is the largest Zariski open subset of $X$ such that $(U, D|_U)$ is a simple normal crossing pair. 
Then we can write 
$$
K_Z+D'=f^*(K_X+D)+E, 
$$ 
where $D'$ and $E$ are effective and have no common irreducible components. 
By construction, $E$ is $f$-exceptional and $\Supp (E+D')$ is a simple normal crossing divisor on $Z$. 
Since $X$ is $S_2$ and simple normal crossing in codimension one, $X$ is semi-normal. 
Then we obtain $f_*\mathcal O_Z\simeq \mathcal O_X$. 
Let $Z=\bigcup _{i\in I}Z_i$ be the irreducible decomposition. 
We consider the short exact sequence
$$
0\to \mathcal O_V(-W|_V)\to \mathcal O_Z\to \mathcal O_W\to 0, 
$$ 
where $W=\bigcup _{i\in J}Z_i$ and $V=\bigcup _{i\in I\setminus J}Z_i$. 
Therefore, 
$$
0\to \mathcal O_V(\lceil E\rceil -W|_V)\to \mathcal O_Z(\lceil E\rceil)\to \mathcal O_W
(\lceil E\rceil)\to 0
$$ 
is exact. 
We note that 
$\lceil E\rceil$ is Cartier (cf.~Remark \ref{211ne}). 
By Lemma \ref{rf}, 
$R^if_*\mathcal O_Z(\lceil E\rceil)=0$ for every $i>0$. 
We note that 
$$
\lceil E\rceil\sim _{\mathbb R, f}K_Z+D'+\{-E\}. 
$$ 
Since 
$$
(\lceil E\rceil -W)|_V\sim _{\mathbb R, f}K_V+(D'+\{-E\})|_V, 
$$ 
$R^if_*\mathcal O_V(\lceil E\rceil -W|_V)=0$ for every $i>0$ 
by Lemma \ref{rf} again. 
Therefore, we obtain 
that 
$$
0\to f_*\mathcal O_V(\lceil E\rceil -W|_V)\to f_*\mathcal O_Z(\lceil E\rceil)\simeq \mathcal 
O_X\to f_*\mathcal O_W(\lceil E\rceil)\to 0 
$$ is exact and that 
$R^if_*\mathcal O_W(\lceil E\rceil)=0$ for 
every $i>0$. 
Since $\lceil E\rceil |_W$ is effective 
and $\mathcal O_X\to f_*\mathcal O_W(\lceil E\rceil)\to 0$ factors 
through $\mathcal O_Y$, 
we have $\mathcal O_Y\simeq f_*\mathcal O_W\simeq f_*\mathcal O_W(\lceil E\rceil)$. 
Therefore, $Y$ is semi-normal because so is $W$. 
In the derived category of coherent sheaves on $Y$, 
the composition 
\begin{equation}\label{siki1}
\mathcal O_Y\to Rf_*\mathcal O_W\to Rf_*\mathcal O_W(\lceil E\rceil )\simeq \mathcal O_Y
\end{equation} 
is a quasi-isomorphism. 
Therefore, $Y$ has only Du Bois singularities because $W$ is a simple normal crossing 
variety. 
On the other hand, 
$R^if_*\omega_W=0$ for every $i>0$ by Lemma \ref{rf}. 
By applying the Grothendieck duality to (\ref{siki1}): 
$$
\mathcal O_Y\to Rf_*\mathcal O_W\to \mathcal O_Y, 
$$
we obtain 
\begin{equation}\label{siki2}
\omega^\bullet _Y\overset {a}\to Rf_*\omega^\bullet _W\overset {b}\to \omega^\bullet _Y,  
\end{equation} 
where $\omega^\bullet_Y$ (resp.~$\omega^\bullet_W$) is 
the dualizing complex of $Y$ (resp.~$W$). 
Note that $b\circ a$ is a quasi-isomorphism. 
Thus we have 
$$
h^i(\omega^\bullet _Y)\subset R^if_*\omega^\bullet _W=R^{i+d}f_*\omega_W
$$ 
where $d=\dim Y=\dim W$. 
This implies that $h^i(\omega^\bullet _Y)=0$ for every $i>-\dim Y$. 
Thus, $Y$ is Cohen--Macaulay and $\omega^\bullet _Y\simeq \omega_Y[d]$. 
\end{proof}

As a byproduct of the proof of Theorem \ref{sdlt}, we obtain the following useful 
vanishing theorem. Roughly speaking, Proposition \ref{prop42} says that 
$Y$ has only {\em{semi-rational}} singularities. 

\begin{prop}\label{prop42}
In the notation of the proof of {\em{Theorem \ref{sdlt}}}, 
$f_*\mathcal O_W\simeq \mathcal O_Y$ and $R^if_*\mathcal O_W=0$ for every $i>0$. 
\end{prop}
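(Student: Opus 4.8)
The plan is to derive Proposition \ref{prop42} from the material already assembled in the proof of Theorem \ref{sdlt}, now that $Y$ is known to be Cohen--Macaulay, by one further application of Grothendieck duality. The isomorphism $f_*\mathcal O_W\simeq\mathcal O_Y$ was established there directly, so the only new point is the vanishing $R^if_*\mathcal O_W=0$ for $i>0$. One cannot read this off from Lemma \ref{rf}, since $\mathcal O_W$ equals $\mathcal O_W(L)$ only for $L=0$ while $-K_W$ is not relatively $\mathbb R$-linearly equivalent to any boundary divisor on $W$. Instead I would transport the vanishing from the dualizing side, where the needed ingredients are all at hand: the proof of Theorem \ref{sdlt} provides $R^if_*\omega_W=0$ for every $i>0$ (from Lemma \ref{rf}), the isomorphism $\omega^\bullet_Y\simeq\omega_Y[d]$ with $d=\dim Y=\dim W$, and morphisms
$$\omega^\bullet_Y\overset{a}{\to} Rf_*\omega^\bullet_W\overset{b}{\to}\omega^\bullet_Y$$
as in (\ref{siki2}) whose composite $b\circ a$ is a quasi-isomorphism. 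Since $W$ is a simple normal crossing variety, hence Gorenstein, one has $\omega^\bullet_W\simeq\omega_W[d]$; together with $R^if_*\omega_W=0$ this shows that $Rf_*\omega^\bullet_W\simeq(f_*\omega_W)[d]$ is concentrated in the single degree $-d$.

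The crux, which I expect to require the most care, is to promote $a$ to an isomorphism, i.e.\ to prove $f_*\omega_W\simeq\omega_Y$. Because $b\circ a$ is a quasi-isomorphism, $a$ is a split monomorphism in $D^b_{\mathrm{coh}}(Y)$, so $Rf_*\omega^\bullet_W\simeq\omega^\bullet_Y\oplus Q$ for some object $Q$; as $Rf_*\omega^\bullet_W$ and $\omega^\bullet_Y$ are both concentrated in degree $-d$, so is $Q$, and writing $Q\simeq N[d]$ we obtain $f_*\omega_W\simeq\omega_Y\oplus N$ for a coherent sheaf $N$ on $Y$. Now $f|_W: W\to Y$ is proper, and each irreducible component $Z_i$ ($i\in J$) of $W$ maps birationally onto the corresponding component $X_i$ of $Y$, so $f|_W$ is birational; moreover $\omega_W$ is invertible on the reduced scheme $W$. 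Hence $f_*\omega_W$ is a torsion-free $\mathcal O_Y$-module that is generically of rank one on every irreducible component of $Y$, just as $\omega_Y$ is, so $N$ is generically of rank zero; being a direct summand of a torsion-free sheaf, $N$ must vanish. Thus $f_*\omega_W\simeq\omega_Y$ and consequently $Rf_*\omega^\bullet_W\simeq\omega^\bullet_Y$.

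It then remains to dualize. Applying $R\mathcal{H}om_Y(-,\omega^\bullet_Y)$ to $Rf_*\omega^\bullet_W\simeq\omega^\bullet_Y$ and invoking Grothendieck duality for the proper morphism $f|_W$ (with $\omega^\bullet_W\simeq(f|_W)^!\omega^\bullet_Y$), together with the biduality isomorphisms $R\mathcal{H}om_W(\omega^\bullet_W,\omega^\bullet_W)\simeq\mathcal O_W$ and $R\mathcal{H}om_Y(\omega^\bullet_Y,\omega^\bullet_Y)\simeq\mathcal O_Y$ for dualizing complexes, we get
$$Rf_*\mathcal O_W\simeq Rf_*R\mathcal{H}om_W(\omega^\bullet_W,\omega^\bullet_W)\simeq R\mathcal{H}om_Y(Rf_*\omega^\bullet_W,\omega^\bullet_Y)\simeq R\mathcal{H}om_Y(\omega^\bullet_Y,\omega^\bullet_Y)\simeq\mathcal O_Y.$$
Reading off the cohomology sheaves gives $f_*\mathcal O_W\simeq\mathcal O_Y$ (recovering the known fact) and $R^if_*\mathcal O_W=0$ for every $i>0$, which is exactly the statement of Proposition \ref{prop42}.
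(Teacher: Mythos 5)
Your proposal is correct and follows essentially the same route as the paper: you upgrade the split map $\omega_Y\to f_*\omega_W\to\omega_Y$ from (\ref{siki2}) to an isomorphism $f_*\omega_W\simeq\omega_Y$ using that $f_*\omega_W$ is pure of dimension $d$ and generically agrees with $\omega_Y$, and then apply Grothendieck duality to $Rf_*\omega^\bullet_W\simeq\omega^\bullet_Y$ to get $Rf_*\mathcal O_W\simeq\mathcal O_Y$. The only difference is that you spell out a few details (concentration in degree $-d$, the splitting, biduality) that the paper leaves implicit.
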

\begin{proof}
By (\ref{siki2}) in the proof of Theorem \ref{sdlt}, 
we obtain 
$$
\omega_Y\overset{\alpha}\to f_*\omega_W\overset{\beta}\to \omega_Y 
$$ 
where $\beta\circ \alpha$ is an isomorphism. 
Since $\omega_W$ is locally free and $f$ is an isomorphism over $U$, 
$f_*\omega_W$ is a pure sheaf of dimension $d$. 
Thus $f_*\omega_W\simeq \omega_Y$ because 
they are isomorphic over $U$. 
Then we obtain $Rf_*\omega^\bullet _W\simeq \omega^\bullet _Y$ 
in the derived category of coherent sheaves on $Y$. By the Grothendieck 
duality, 
$Rf_*\mathcal O_W\simeq R\mathcal H om (Rf_*\omega^\bullet _W, 
\omega^\bullet _Y)\simeq \mathcal O_Y$ in the derived category of 
coherent sheaves on $Y$. Therefore, 
$f_*\mathcal O_W\simeq \mathcal O_Y$ and $R^if_*\mathcal O_W=0$ for 
every $i>0$. 
\end{proof}

As an easy application of Theorem \ref{sdlt}, we have 
an adjunction formula for sdlt pairs. 

\begin{cor}[Adjunction for sdlt pairs]\label{adj}
In the notation of {\em{Theorem \ref{sdlt}}}, 
we define $D_Y$ by 
$$
(K_X+D)|_Y=K_Y+D_Y. 
$$ 
Then the pair $(Y, D_Y)$ is semi divisorial log terminal. 
\end{cor}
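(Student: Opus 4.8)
The plan is to reduce the adjunction statement for the semi divisorial log terminal pair $(X,D)$ to the situation where both $X$ and $Y=\bigcup_{i\in J}X_i$ are handled by their normalizations, exactly as in the definition of sdlt. First I would record the two facts coming out of Theorem \ref{sdlt}: each irreducible component of $X$ is normal, and $Y$ is semi-normal, $S_2$, and simple normal crossing in codimension one (the last because $X$ is, and $Y$ is a union of components of $X$). In particular $K_Y$ makes sense, and since $(K_X+D)|_Y$ is $\mathbb{R}$-Cartier (as $K_X+D$ is), the divisor $D_Y$ defined by $(K_X+D)|_Y=K_Y+D_Y$ is a well-defined $\mathbb{R}$-Weil divisor with $K_Y+D_Y$ $\mathbb{R}$-Cartier. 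One then has to check: (a) the coefficients of $D_Y$ lie in $(0,1]$ and no component of $D_Y$ sits in the singular locus of $Y$; and (b) the discrepancy condition $a(E,Y,D_Y)>-1$ for every exceptional divisor $E$ over $Y$ whose center is a point where $(Y,D_Y)$ is not snc.

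For (a) and (b), the key point is that on the normalization $\nu\colon X^\nu\to X$ we may write $K_{X^\nu}+\Theta=\nu^*(K_X+D)$ with $(X^\nu,\Theta)$ divisorial log terminal (this is the content of $(X,\Delta)$ being sdlt, combined with the normality of the components just obtained: $X^\nu$ is the disjoint union of the normal components $X_i$, and on each component the pair is dlt). Now $Y^\nu$ is the disjoint union of those $X_i$ with $i\in J$, the conductor divisor $\Theta^{\mathrm{diff}}$ cut out on $Y^\nu$ by the standard dlt adjunction (the different) is a boundary, and $(Y^\nu,\Theta^{\mathrm{diff}})$ is dlt by the usual adjunction for dlt pairs on a normal variety. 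Since $Y$ is semi-normal, $Y^\nu\to Y$ is the normalization of $Y$ and $(K_X+D)|_Y$ pulls back to $K_{Y^\nu}+\Theta^{\mathrm{diff}}$; hence $D_Y$ is exactly the divisor whose pullback to $Y^\nu$ is $\Theta^{\mathrm{diff}}$, i.e. $D_Y$ is the "semi-different." This immediately gives the coefficient bounds and the non-containment in $\operatorname{Sing}Y$ in (a), since $\Theta^{\mathrm{diff}}$ is a boundary and its components are not in the conductor. For (b): an exceptional divisor $E$ over $Y$ with center $c_Y(E)$ a point where $(Y,D_Y)$ is not snc. If $c_Y(E)$ lies in the double locus of $Y$, then near that point $Y$ is already snc by semi-snc-ness of $X$ in codimension one refined to a neighborhood — more precisely one checks $(Y,D_Y)$ is snc there using that $(X,D)$ is sdlt and the component structure is snc — so such $E$ does not occur; otherwise $c_Y(E)$ lies in the smooth locus of a single component $X_i$, and there $a(E,Y,D_Y)=a(E,X_i,D_Y|_{X_i})=a(E,X^\nu,\Theta)>-1$ because $(X_i,\Theta|_{X_i})$ is dlt and $E$'s center is a non-snc point of it.

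The main obstacle I expect is the bookkeeping identifying $D_Y$ with the different, i.e. checking that the formula $(K_X+D)|_Y=K_Y+D_Y$ on the (possibly non-normal, but $S_2$ and semi-normal) scheme $Y$ is compatible under $\nu\colon Y^\nu\to Y$ with the usual adjunction formula $\nu^*(K_Y+D_Y)=K_{Y^\nu}+\Theta^{\mathrm{diff}}$; one must pin down $K_Y$ correctly on the Gorenstein-in-codimension-one, $S_2$ scheme $Y$ and verify that restriction of the Cartier divisor $K_X+D$ to $Y$ commutes with the two-step process (restrict to $Y$, then normalize) versus (normalize $X$, restrict $\Theta$ to $Y^\nu$). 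Once this compatibility is in hand, everything else is the standard dlt adjunction applied componentwise together with Theorem \ref{sdlt}. A clean way to organize the write-up is: Step 1, invoke Theorem \ref{sdlt} for normality of components and semi-normality of $Y$; Step 2, do dlt adjunction on $X^\nu\supset Y^\nu$; Step 3, descend along $Y^\nu\to Y$ to conclude $(Y,D_Y)$ is sdlt.
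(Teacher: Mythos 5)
Your proposal follows the same route as the paper's: the only substantive input is Theorem \ref{sdlt}, which gives that $Y$ is Cohen--Macaulay (hence $S_2$) and semi-normal with normal irreducible components, after which the remaining conditions of Definition \ref{def-sdlt} are formal. The paper dismisses this last part with ``it is easy to see,'' while you spell it out via the normalization $Y^\nu=\coprod_{i\in J}X_i\subset X^\nu$ and the identification of $\nu^*(K_Y+D_Y)$ with $(K_{X^\nu}+\Theta)|_{Y^\nu}$; that is exactly the intended verification.

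One claim in your discrepancy check is wrong as stated: you assert that an exceptional divisor $E$ over $Y$ whose center lies in the double locus of $Y$ at a point where $(Y,D_Y)$ is not simple normal crossing ``does not occur,'' because $Y$ would already be snc there. This is false: $X$ (hence $Y$) is only assumed snc in codimension one, so $(Y,D_Y)$ can fail to be snc at deeper points of the double locus (for instance a component of $D$ that is singular at, or tangent to, a point of $X_i\cap X_j$ with $i,j\in J$), and such $E$ certainly exist. The correct statement, which you essentially have on the table, is the converse implication: if $(X,D)$ is snc at the generic point of $c_Y(E)$, then so is $(Y,D_Y)$, since locally $Y$ is a union of the snc components and $D_Y=\bigl(D+\sum_{j\notin J}X_j\bigr)\big|_Y$ is snc on it. Contrapositively, at any point where $(Y,D_Y)$ is not snc the pair $(X,D)$ is not snc either, so Definition \ref{def-sdlt} applied to $(X,D)$ gives $a(E,X,D)>-1$, and the identity $\nu^*(K_Y+D_Y)=(K_{X^\nu}+\Theta)|_{Y^\nu}$ gives $a(E,Y,D_Y)=a(E,X,D)$, which concludes. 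With this repair, replacing your two-case split by the single snc comparison, your argument is complete and matches the paper's.
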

\begin{proof}
By Theorem \ref{sdlt}, $Y$ is Cohen--Macaulay. 
In particular, $Y$ satisfies Serre's $S_2$ condition. 
Then it is easy to see that the pair $(Y, D_Y)$ is semi divisorial log terminal. 
\end{proof}

We close this section with an important remark. 

\begin{rem}
Let $(X, D)$ be a semi divisorial log terminal pair in the sense of 
Koll\'ar (see Definition \ref{def-sdlt}). 
Then it is a semi divisorial log terminal pair in the sense of \cite[Definition 1.1]{fujino-abun}. 
A key point is that any irreducible component of $X$ is normal (see Theorem \ref{sdlt}). 
When the author defined semi divisorial log terminal pairs 
in \cite[Definition 1.1]{fujino-abun}, the theory of resolution of 
singularities for {\em{reducible}} varieties (cf.~\cite{bierstone-milman} and \cite{bierstone-p}) 
was not available. 
\end{rem}

\section{Semi-positivity theorem}\label{sec-semi}

In \cite[Chapter 2]{book}, we discuss mixed Hodge structures on cohomology 
groups with compact support for the proof of Theorem \ref{2} (see also \cite{fujino-inj}). 
In \cite{fuji-fuji}, we investigate variations of 
mixed Hodge structures on cohomology groups with compact support. 
By the Hodge theoretic semi-positivity theorem obtained in \cite[Section 6]{fuji-fuji}, 
we can prove the following theorem as an application of 
Theorem \ref{8}. 

\begin{thm}[Semi-positivity theorem]\label{11}
Let $(X, D)$ be a simple normal crossing pair such that 
$D$ is reduced and let $f:X\to Y$ be 
a projective surjective morphism onto a smooth 
complete algebraic variety $Y$. 
Assume that every stratum of $(X,D)$ is dominant onto $Y$. 
Let $\Sigma$ be a simple normal crossing divisor on $Y$ such that 
every stratum of $(X, D)$ is smooth over ${Y}_0=Y\setminus \Sigma$.
Then $R^if_*\omega_{X/Y}(D)$ is locally free for every $i$. 
We set $X_0=f^{-1}(Y_0)$,
$D_0=D|_{X_0}$, and $d=\dim X-\dim Y$. 
We further assume that 
all the local monodromies on
$R^{d-i}({f|_{X_0\setminus D_0}})_{!}\mathbb Q_{{X_0} \setminus D_0}$ 
around $\Sigma$ 
are unipotent. 
Then we obtain that
$R^if_*\omega_{X/Y}(D)$ is 
a semi-positive {\em{(}}in the sense of Fujita--Kawamata{\em{)}} 
locally free sheaf on $Y$, that is, a {\em{nef}} locally free sheaf on $Y$. 
\end{thm}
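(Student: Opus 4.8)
The plan is to deduce the theorem from two essentially independent inputs: the torsion-free theorem (Theorem~\ref{8}) applied to the trivial $f$-semi-ample divisor $L-(K_X+D)=0$, and the theory of admissible variations of mixed Hodge structures on cohomology groups with compact support developed in \cite{fuji-fuji} (see \ref{hodge}). First I would set $L=K_X+D$, so that $\mathcal O_X(L)=\omega_X(D)$ and $L-(K_X+D)=0$ is $f$-semi-ample; Theorem~\ref{8}~(i) then says that every associated prime of $R^if_*\omega_X(D)$ is the generic point of the $f$-image of some stratum of $(X,D)$. Since every stratum is assumed dominant onto $Y$, each such $f$-image is $Y$ itself, so $R^if_*\omega_X(D)$ — and hence $R^if_*\omega_{X/Y}(D)=R^if_*\omega_X(D)\otimes\omega_Y^{-1}$, $Y$ being smooth — is a torsion-free $\mathcal O_Y$-module.

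Next I would carry out the Hodge-theoretic identification over $Y_0=Y\setminus\Sigma$. Because every stratum of $(X,D)$ is smooth over $Y_0$, the morphism $f|_{X_0\setminus D_0}$ is, locally over $Y_0$, a fibration by pairs of constant combinatorial type, and $R^{d-i}(f|_{X_0\setminus D_0})_!\mathbb Q$ underlies an admissible variation of mixed Hodge structure $\mathcal V$ on $Y_0$; this is precisely the variational form of the mixed Hodge structure on $H^\bullet_c(X_y\setminus D_y,\mathbb C)$ recalled in \ref{hodge}. The relative version of the isomorphism $\Gr^0_F H^\bullet_c(X_y\setminus D_y,\mathbb C)\simeq H^\bullet(X_y,\mathcal O_{X_y}(-D_y))$, together with relative Serre duality (legitimate since $X$ is Gorenstein and $\omega_X$ invertible), identifies $R^if_*\omega_{X/Y}(D)|_{Y_0}$ with the dual of $\Gr^0_F\mathcal V$ twisted by $\omega_{Y_0}^{-1}$; in particular it is locally free on $Y_0$, being a Hodge bundle, once one knows that the relevant Hodge-to-de Rham (weight) spectral sequence degenerates, fibre by fibre and in families.

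For local freeness on all of $Y$ I would pass to the canonical (Deligne) extension $\overline{\mathcal V}$ of $\mathcal V$ across the simple normal crossing divisor $\Sigma$: for an admissible VMHS the Hodge filtration pieces of $\overline{\mathcal V}$ are locally free, and the point is to prove that $R^if_*\omega_{X/Y}(D)$ coincides on all of $Y$ with $\omega_Y^{-1}$ tensored by the appropriate graded piece of $\overline{\mathcal V}$. The two sheaves agree over $Y_0$ by the previous step, $R^if_*\omega_{X/Y}(D)$ is torsion-free by the first step, and the Hodge piece of $\overline{\mathcal V}$ is locally free; so it suffices to match them in a punctured-disc neighbourhood of a general point of $\Sigma$, which amounts to the statement that $R^if_*\omega_{X/Y}(D)$ commutes with base change and is computed by the relative log-de Rham complex of the reducible total space — again a consequence of the degeneration of the associated weight spectral sequence in families. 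This gives the first assertion. Finally, assuming all local monodromies of $R^{d-i}(f|_{X_0\setminus D_0})_!\mathbb Q$ around $\Sigma$ to be unipotent, $\overline{\mathcal V}$ is the Deligne extension with nilpotent residues, and I would invoke the Hodge-theoretic semi-positivity theorem of \cite[Section 6]{fuji-fuji} — the extension of the Fujita--Kawamata semi-positivity theorem to the relevant Hodge bundle of an admissible VMHS on cohomology groups with compact support, proved by reduction to curves together with the curvature estimate for the Hodge metric and the control of boundary degrees supplied by the unipotent canonical extension. By the identification above this bundle is $R^if_*\omega_{X/Y}(D)$, which is therefore nef.

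I expect the main obstacle to be the globalization step: upgrading the Hodge-theoretic picture from $Y_0$ to all of $Y$, i.e.\ showing that the abstractly defined coherent sheaf $R^if_*\omega_{X/Y}(D)$ literally is the canonical extension (up to dual and twist) across $\Sigma$, rather than some modification of it. This is where the genuinely new ingredient enters — the theory of mixed Hodge structures on cohomology groups with compact support for \emph{reducible} simple normal crossing pairs and its degeneration properties in families — so the argument cannot simply quote the classical smooth Fujita--Kawamata machinery; one must run the weight-spectral-sequence and relative-duality arguments for the reducible total space $X$ along the discriminant $\Sigma$. A secondary technical point is the bookkeeping needed to verify that $R^if_*\omega_{X/Y}(D)$ is the \emph{semi-positive} Hodge piece (the analogue of $f_*\omega_{X/Y}$) and not its anti-nef dual; this follows from the duality formula in \ref{hodge} but has to be tracked carefully through the compact-support normalization, where $H^\bullet_c(X_y\setminus D_y,\mathbb Q)$ is not dual to ordinary cohomology.
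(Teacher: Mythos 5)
The paper itself states Theorem~\ref{11} without proof, explicitly presenting it as an application of Theorem~\ref{8} combined with the Hodge-theoretic semi-positivity theorem of \cite[Section 6]{fuji-fuji}, and your outline --- torsion-freeness of $R^if_*\omega_X(D)$ via Theorem~\ref{8}~(i) with $L=K_X+D$ and the dominance of all strata, identification over $Y_0$ of $R^if_*\omega_{X/Y}(D)$ with the dual of $\Gr^0_F$ of the admissible variation of mixed Hodge structure on $R^{d-i}(f|_{X_0\setminus D_0})_!\mathbb Q_{X_0\setminus D_0}$ (cf.~\ref{hodge}), extension across $\Sigma$ via the canonical extension, and nefness from the unipotent-monodromy semi-positivity theorem of \cite{fuji-fuji} --- is exactly this intended route, with the globalization step you single out being precisely the content established in \cite{fuji-fuji}. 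One bookkeeping correction: since you work with the \emph{relative} dualizing sheaf, $R^if_*\omega_{X/Y}(D)|_{Y_0}$ is the dual of $\Gr^0_F\mathcal V$ with no extra $\omega_{Y_0}^{-1}$ twist (the twist is already absorbed in $\omega_{X/Y}=\omega_X\otimes f^*\omega_Y^{-1}$), and carrying that spurious twist through would misidentify which bundle the semi-positivity theorem of \cite{fuji-fuji} makes nef.
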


We note that Theorem \ref{11} is a generalization of the Fujita--Kawamata semi-positivity theorem 
(cf.~\cite{kawamata1}).  We also note that 
Theorem \ref{11} contains the main theorem of \cite{high}. 
In \cite{high}, we use variations of mixed Hodge structures on cohomology groups of smooth quasi-projective 
varieties. However, our formulation in \cite{fuji-fuji} based on 
mixed Hodge  structures on cohomology groups with compact support is more suitable for reducible varieties 
than the formulation in \cite{high} (cf.~\ref{hodge}). 
Theorem \ref{8} and Theorem \ref{11} show that 
the theory of mixed Hodge structures on cohomology groups 
with compact support is useful for the study of 
higher dimensional algebraic varieties. 
For details, see \cite[Chapter 2]{book}, 
\cite{fuji-fuji}, and \cite{fujino-inj}.  

Finally, we note that in \cite{fujino-semi} we prove the projectivity of the moduli spaces of stable varieties 
as an application of Theorem \ref{11} by Koll\'ar's projectivity criterion. 

\end{document}